\setlist{nolistsep}
\def\bP{\mathbb P}
\def\ar[r]{\to}
\def\cA{\mathcal A}
\def\cO{\mathcal O}
\newtheorem{thm}{Theorem} 
\newtheorem*{thm*}{Theorem}
\newtheorem*{prop*}{Proposition}
\newtheorem{cor}[thm]{Corollary}
\newtheorem*{cor*}{Corollary}
\newtheorem{lem}[thm]{Lemma}
\newtheorem*{lem*}{Lemma}
\newtheorem*{claim*}{Claim}
\newtheorem{prop}[thm]{Proposition}
\theoremstyle{remark}
\newtheorem{rem}[thm]{Remark}
\newtheorem*{rem*}{Remark}
\newtheorem{crit-rem}[thm]{Critical remark}
\newtheorem{remarks}[thm]{Remarks}
\newtheorem{example}[thm]{Example}
\newtheorem*{example*}{Example}
\newtheorem*{defn*}{Definition}
\newtheorem*{con*}{Conjecture}
 \DeclareMathOperator{\gr}{Gr}
 \DeclareMathOperator{\Hom}{Hom}
\DeclareMathOperator{\coker}{coker}
\def\cA{\mathcal A}
\def\cO{\mathcal O}
\def\bP{\mathbb P}
\def\cF{\mathcal{F}}
\def\refp #1.{(\ref{#1})}
\newcommand\carets [1]{\langle #1 \rangle}
\newcommand{\A}{\mathcal{A}}
\newcommand{\M}{\mathcal{M}}
\newcommand{\Cal}[1]{\mathcal #1}
\def\sbr #1.{^{[#1]}}
\def\sfl #1.{^{\lfloor #1\rfloor}}
\def\?{{\bf{??}}}
\def\Proj{\textrm{Proj}}
\def\M{\Cal M}
\def\A{\Bbb A}
\def\P{\mathbb P}
\def\R{\mathbb R}
\def\Z{\mathbb Z}
\def\sym{\text{\rm Sym} }
\def\O{\mathcal O}
\def\Sym{\textrm{Sym}}
\def\g{\mathfrak g}
\def\k{\mathfrak k}
\def\hom{\mathfrak {hom}}
\def\1/2{\frac{1}{2}}
\def\I{\mathcal{ I}}
\def\im{\text{im}}
\def\2{{[2]}}
\def\l{\ell}
\def\nl{\newline}
\def\hom{\mathcal{H}\mathit{om}}
\def\<{\langle}
\def\>{\rangle}
\def\im{\text{im}}
\def\2{{[2]}}
\def\l{\ell}
\def\Proj{\text{Proj}}
\def\scl #1.{^{\lceil#1\rceil}}
\def\spr #1.{^{(#1)}}
\def\sbc #1.{^{\{#1\}}}
\def\subpr#1.{_{(#1)}}
\def\beq{\begin{equation*}}
\def\eeq{\end{equation*}}
\newcommand{\mlog}[1]{\langle-\log {#1} \rangle}
\def\g3{{\Gamma\spr 3.}}
\newcommand{\eqspl}[2]{
\begin{equation}\label{#1}
\begin{split}
#2\end{split}\end{equation}}
\newcommand{\exseq}[3]{
0\to #1\to #2\to #3\to 0
}
\newcommand{\beginalphaenum}{
\begin{enumerate}\renewcommand{\labelenumi}{ }
\item \begin{enumerate}
}
\def\eex{\end{rm}\end{example}}
\begin{document} 
	\title{A family of  indecomposable rank-$n$ vector bundles on $\P^n\times\P^n$
	in positive characteristics }
	\author 
	{Ziv Ran and J\"urgen Rathmann}
%
%
	\thanks{arxiv.org 2503.24339  }
	\date {\DTMnow}
%
%
	\address {\nl ZR: UC Math Dept. \nl
	Skye Surge Facility, Aberdeen-Inverness Road
	\nl
	Riverside CA 92521 US\nl 
	ziv.ran @  ucr.edu\nl
	\url{https://profiles.ucr.edu/app/home/profile/zivran}\newline
	JR: jk.rathmann@gmail.com
	}
	
	 \subjclass[2010]{14n05, 14j60}
	\keywords{vector bundle, projective space, monad}
	\begin{abstract}
We construct a sequence of rank-$n$ indecomposable vector bundles on $\P^n\times\P^n$ 
for every $n\geq 2$ and in every positive characteristic $p$ that
are not pullbacks via any map  $\P^n\times\P^n\to \P^{m} $. We study in detail the case
$p=n=2$ where the bundles in question can be described by a monad.


		\end{abstract}
\maketitle	
\section*{Data availability statement}
The manuscript has no associated data.
\section*{Conflict of interest statement}
On behalf of all authors, the corresponding author states that there is no conflict of interest.
\section*{Introduction}
In 1973 Horrocks and Mumford \cite{horrocks-mumford} constructed an indecomposable rank  $r=2$ 
vector bundle on $\bP^4$, and up to standard modifications (twists and pullbacks 
by finite self-maps of projective space) this bundle remains the only known indecomposable
rank 2 bundle on $\bP^n,n\ge 4$ in characteristic 0.
Since then, there have been just a few other constructions of indecomposable bundles of
rank 2, including the Horrocks bundle on $\P^4$ in characteristic 2 \cite{horrocks-bundles-on-pn},
the Tango bundle on $\bP^5$ in characteristic 2  \cite{tango-morphism}, and several families on 
$\bP^4$ in positive characteristic and on $\bP^5$ in characteristic 2 constructed by 
Kumar, Peterson and Rao \cite{kumar-p4, kumar-peterson-rao}. In fact a conjecture of Hartshorne states that
no indecomposable rank 2 vector bundle exists on $\bP^n$, $n\ge 6$.

Even in the higher rank in the range  $3\le r\le n-2$, there are only few 
non-trivial (i.e., up to twists, finite pull-backs and extensions of other known bundles) 
constructions of indecomposable vector bundles of rank $r$ on any projective space of 
dimension $n\ge 5$: 
Horrocks (cf. \cite{horrocks}, \cite{decker-manolache-schreyer}, \cite{ancona-ottaviani}) constructed an indecomposable
rank-3 bundle on $\P^5$
(in any characteristic) whose reduction 
in characteristic 2 has Tango's rank 2 bundle as a subbundle, and Kumar and his coauthors
exhibited several rank-3 examples on $\bP^5$ in characteristic 2 \cite{kumar-peterson-rao}.

The picture improves somewhat for $r=n-1$, where we know a family of bundles 
constructed by Tango \cite{tango-morphism}, the null-correlation bundles
(for odd $n$), the Sasakura bundle on $\bP^4$ \cite{a-d-sasakura},
and several families of rank 3 bundles on $\bP^4$ (any characteristic) in \cite{kumar-peterson-rao}.

In addition, one can construct many rank $n-1$ bundles on $\bP^{n}$ for odd $n$ using
$L$-symplectic structures for some line bundle $L$, as follows:
\begin{enumerate}
\item[--]
A globally generated $L$-symplectic bundle $E$ of
rank $r$ has a trivial line subbundle $S$ \cite[4.3.1]{okonek}, 
and the symplectic structure provides by duality a 
line bundle quotient of $E/S$, 
thus exhibiting an $L$-symplectic bundle of rank $r-2$ as a subquotient.
\item[--]
Given an arbitrary $L$-symplectic bundle $E$, the twist $E\otimes L(m)$ will be globally generated 
and $L(2m)$-symplectic for any large $m$. 
\item[--]
By repeatedly applying the first two steps (possibly after first adding copies of the trivial $L$-symplectic
bundle $\O\oplus L$ if the rank of $E$ is less than $n$), one obtains an $L'$-symplectic subquotient of 
rank $k\leq n$ for some line bundle $L'$.
\item[--]
Since the rank of any $L$-symplectic bundle is even and the reduction step reduces the rank by $2$, 
the final subquotient must have rank $k\leq n-1$, if $n$ is odd.
\end{enumerate}
\medskip

Most of these constructions date from the 1970s and early 1980s, and the status at that time
is summarized in the book by Okonek, Schneider and Spindler \cite{okonek}.
Later years have seen only marginal progress, with the notable exception of the work of Kumar, Peterson and Rao.\par
Next to projective spaces one might naturally inquire about products of such.
In that vein, the only work we are aware of is that of Solis \cite{solis-p1p1}
on rank-2 vector bundles on $\P^1\times\P^1$.\par

\par
The purpose of this paper is to construct some  indecomposable rank-$n$ vector bundles
$E$ on  $\P^n\times\P^n$ in every positive characteristic  $p$, with special attention
to the case $p=n=2$.
In a separate paper
\cite{p2p2-II} we study zero sets of the latter bundles in some detail and
relate them to  the geometry of nonclassical Enriques surfaces in $\P^2\times\P^2$.
\par 
To state our  general results we use the following notation. $\P^n_L, \P^n_h$
are copies of $\P^n$ with hyperplane classes $L$ resp. $h$, 
$p_L:\P^n_L\times\P^n_h\to\P^n_L$ is the projection and ditto $p_h$,  
$Q_L, Q_h$ are the respective
 tautological quotient bundles and $F$ is the Frobenius endomorphism.
 We also recall that a \emph{monad} is a complex of locally free sheaves of the form
 \[0\to A\to B\to C\to 0\]
 which is exact except possibly at the $B$ term, and such that the map $A\to B$
 embeds $A$ as a subbundle.  
Now our main  result is the following.. 
\begin{thm*}\label{main-thm}[Main Theorem] (i)
For every $n\geq 2$, $p>0$ prime, $q=p^a, 1\leq k\leq q$,  and
every smooth divisor $\cA\in |L+h|$ on $\P^n_L\times\P^n_h$, 
the kernel  $E$ of the canonical map
\[p_h^*F^{a*}Q_h\to\O_{k\cA}(qL)\]
is a
 rank-$n$ vector bundle on $\P^n_L\times\P^n_h$
that is indecomposable and  not a twist of a pullback from any $\P^m$.\par
(ii) Moreover in case $p=n=2$, a suitable twist of $E$ is the cohomology of a monad of the form
\[\O\to Q_L\otimes Q_h\to \O(L+h).\]
Conversely the cohomology bundle of such a monad is one of the bundles $E$ above.
\end{thm*}
Some closely related constructions were implicitly contained in a paper by
Lauritzen and Rao \cite{lauritsen-rao}.\par
As this result suggests in case $k=1$, there is in fact a bijection between bundles $E$ and divisors
$\cA$ which extends to deformations. Hence the moduli space of $E$ may
be identified with the open set of smooth divisors in the linear system $|L+h|=\P^{n^2+2n}$. \par
The case $n=q=2, k=1$ will be studied further in our paper
\cite{p2p2-II}.  There we obtain more detailed results about the bundle $E$ 
and its zero-sets, as follows:
\begin{itemize}
\item  The zero set $Y$ of a general section of a suitable  twist of $E$ is a 
smooth Enriques surface
in characteristic 2 with trivial canonical bundle and irregularity 1 (such surfaces
are usually called nonclassical). The general smooth zero set is 
ordinary, i.e. singular in the sense of Bombieri-Mumford 
(Frobenius acts isomorphically on $H^1(\O_Y)$) and there is a 
codimension-1 subfamily of smooth supersingular zero sets (Frobenius is zero on $H^1(\O_Y)$).
\item  The same twist has special sections with zero-set of the form $Y_1\cup_WY_2$ where $Y_1\simeq \P^2,$
$Y_2$ is an elliptic ruled surface and $W$ is a smooth   elliptic curve
\end{itemize}
\bigskip

The paper is organized in 2 parts. First in Part 1, \S 1 establishes some preliminaries and notation. 
The main construction is presented in \S 2, which also studies the restriction of the bundle $E$
on the divisor $\cA$ and on lines contained or not contained in $\cA$ and proves 
an indecomposability and strong nondegeneracy property  for $E$. The cohomology of twists of
$E$ is studied in \S 3. In \S 5 we establish a symmetry property of the divisor $\cA$
which shows that even though the construction of $E$ is asymmetric with respect to
interchanging the factors, the divisor $\cA$ in fact depends symmetrically on $E$.
Part 2, about the case $p=n=2$, 
establishes the equivalence of the construction via elementary modification along $\cA$
and another construction via a monad. These results play an important role in \cite{p2p2-II}. 
Going from modification to monad, the argument is based on a relative Beilinson monad.
In the opposite direction the argument is based on studying jumping lines in fibres.

\subsection*{Acknowledgment} We are  grateful to Gwoho Liu for his assistance
with numerical experiments that ultimately led to (the Chern classes of) $E$.
\subsection*{Conventions} We work over a fixed algebraically closed field $\k$, 
usually of characteristic $p>0$.
The dual of a vector bundle $E$ is denoted by $E^*$ or $\check E$

\section{Preliminaries: symmetric, exterior, divided powers}\label{divided-sec}%
Let $\k$ be an algebraically closed field of characteristic $p>0$,
and let $X$ be a $\k$-scheme.

The (absolute) Frobenius is a morphism $F\colon X\to X$ of schemes (but not of $k$-schemes)
which is the identity on the topological space $X$
and where $F^\#\colon \O_X\to \O_X$ is the $p$th power map.
$F$ is a finite, flat, purely inseparable morphism.

The pullback of sheaves under Frobenius is right-exact (as is any pull-back functor), 
and we would like to note the following:
\begin{enumerate}
\item
If $\cF$ is locally free on $X$ and defined by transition matrices $A_{ji}$ with respect to a
covering $(U_i)$ of $X$ (where $F$ is free on $U_i$), then $F^*\cF$ is defined by
transition matrices $A^{(p)}_{ji})$ where each entry from $A_{ji}$ has been raised to the 
$p$th power.
\item
If $L$ is a line bundle on $X$, then as sheaves, $F^*L\cong L^{\otimes p}$.
\end{enumerate}

See, e.g. \cite{eisenbud-view}, Appendix 2, for details.\par
The (graded) symmetric algebra $\sym^\bullet M$ on a module $M$ is obtained from the 
graded tensor algebra 
by factoring out the homogeneous ideal generated by
$\{x\otimes y - y\otimes x , x, y\in M\}$, whereas the exterior algebra $\wedge^\bullet M$ is obtained by factoring out
the ideal generated by $\{x\otimes x, x\in M\}$.

In characteristic $2$, we have 
\[(x+y)\otimes (x+y)-x\otimes x-y\otimes y
=x\otimes y-y\otimes x,\] 
hence the ideal defining $\wedge^\bullet M$ contains the ideal defining $\sym^\bullet M$,
so the exterior algebra is a quotient of the symmetric algebra.\par
For a locally free module $M$, we denote by $D^\bullet M$ its graded 
divided power algebra (see \cite{eisenbud-view} or \cite[3.9]{berthelot-ogus}). It is 
generated by sections of the form $a^i/i!, a\in M$.

Key property of the divided power algebra is that
\begin{equation*}
D^k(M)=\big(\Sym^k(M^\vee)\big)^\vee.
\end{equation*}

\begin{lem}
Let $M$ be a locally free sheaf on a scheme over a field $\k$ of characteristic $p\geq 0$.
\begin{enumerate}
\item[\textup{1.}]
If $p>0$, there is an inclusion $F^*M\to\sym^pM$ whose image consists of the $p$th powers.
\item[\textup{2.}]
There is an exact sequence
\begin{equation*}
0\to \wedge^2M \to M\otimes M \to \sym^2M \to 0
\end{equation*}
which splits if $p\neq 2$.
\item[\textup{3.}]
If $p=2$, then we have exact sequences
\begin{equation*}
0 \to F^*M \to \sym^2M \to \wedge^2 M \to 0
\end{equation*}
and
\begin{equation*}
0\to \wedge^2M \to D^2M \to F^*M \to 0.
\end{equation*}
\end{enumerate}
\end{lem}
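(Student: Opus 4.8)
The plan is to reduce every assertion to linear algebra over $\k$. All the functors involved---tensor, symmetric, exterior and divided powers, and the Frobenius pullback $F^*$---are natural in $M$ and commute with localization, and all the maps to be constructed are $\O_X$-linear and given by universal formulas. Since $M$ is locally free, it therefore suffices to verify each statement after passing to a local frame $e_1,\dots,e_n$ of $M$; each claim then becomes an identity among explicit basis monomials together with a rank count (recall $\rk F^*M=\rk M=n$, and that for locally free sheaves a right-exact complex whose outer ranks add up to the middle one, with injective left map, is short exact). I would treat the three parts in order, since part 0 feeds the first sequence of part 2, while part 1 is independent.

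For part 0 the key point is the ``$p$th power'' map $\varphi\colon M\to\sym^pM$, $\varphi(s)=s^p$ (the $p$-fold product in the symmetric algebra). Using $\binom{p}{k}\equiv 0\pmod p$ for $0<k<p$ one checks $\varphi(s+t)=\varphi(s)+\varphi(t)$, while visibly $\varphi(cs)=c^p\varphi(s)$; thus $\varphi$ is additive and $F^{\#}$-semilinear, so by the universal property of Frobenius pullback it factors through an $\O_X$-linear map $F^*M\to\sym^pM$. On a frame this sends the basis $e_i\otimes 1$ of $F^*M$ to the distinct monomials $e_i^p\in\sym^pM$, which are linearly independent; hence the map is injective, and its image is the subsheaf locally generated by the $p$th powers $s^p$, as asserted.

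For part 1 the surjection is the symmetrization $M\otimes M\to S^2M$, and the left map is antisymmetrization $\wedge^2M\to M\otimes M$, $x\wedge y\mapsto x\otimes y-y\otimes x$; this is well defined in every characteristic because $x\wedge x\mapsto 0$, it is injective on a frame, and its composite into $S^2M$ vanishes, so the identity $\binom n2+\binom{n+1}2=n^2$ gives exactness. When $p\neq 2$ the map $S^2M\to M\otimes M$, $xy\mapsto\tfrac12(x\otimes y+y\otimes x)$, splits it. For the first sequence of part 2 (so $p=2$), I use that $\wedge^\bullet M$ is a quotient of $\sym^\bullet M$ (observed in Subsection~\ref{divided-sec}), giving a surjection $S^2M\to\wedge^2M$, $xy\mapsto x\wedge y$; the part-0 inclusion $F^*M\hookrightarrow S^2M$ lands in its kernel since $s^2\mapsto s\wedge s=0$, and the dimension identity $n+\binom n2=\binom{n+1}2$ forces exactness.

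Finally I obtain the second sequence of part 2 by dualizing the first sequence applied to $M^\vee$. Using the key property $D^2M=(S^2M^\vee)^\vee$, the identifications $(\wedge^2M^\vee)^\vee\cong\wedge^2M$ and $(F^*M^\vee)^\vee\cong F^*M$ (Frobenius pullback commutes with dualization for locally free sheaves), and exactness of dualizing locally free sheaves, the dual of $0\to F^*M^\vee\to S^2M^\vee\to\wedge^2M^\vee\to 0$ becomes exactly $0\to\wedge^2M\to D^2M\to F^*M\to 0$. The main obstacle I anticipate is purely bookkeeping in positive characteristic: pinning down the Frobenius identifications precisely---both the semilinear universal property in part 0 and the commutation $(F^*M^\vee)^\vee\cong F^*M$ used in the duality step---and checking that the dualized maps genuinely coincide with the stated ones. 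The exactness itself is then just the rank counts above.
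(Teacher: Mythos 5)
Your proof is correct, and it diverges from the paper's at the one step that carries real content: identifying the kernel of $S^2M\to\wedge^2M$ with $F^*M$ in characteristic $2$. The paper works with the abstract kernel --- locally free with basis the images of $e_i\otimes e_i$ --- and then recognizes it as $F^*M$ by observing that its transition matrices are those of $M$ with every entry squared; you instead promote part 0 to a canonical, globally defined map $F^*M\to S^2M$ (via Frobenius-semilinearity of $s\mapsto s^p$) and check on a frame that it carries a basis onto a basis of the kernel. This buys you something: no cocycle comparison between trivializations is needed, the identification is canonical rather than glued, and parts 0 and 1 (which the paper dismisses as ``obvious''/standard) receive actual proofs along the way. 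The duality step for the second sequence of part 2 is the same in both treatments, including the use of $D^2M=(\sym^2 M^\vee)^\vee$ and the compatibility of $F^*$ and $\wedge^2$ with duals. One caution: the blanket principle you invoke --- a right-exact complex of locally free sheaves with injective left map and additive ranks is short exact --- is false if ``injective'' means injective as a map of sheaves: on $\P^1$ the complex $\O(-3)\to\O(-1)^{\oplus 2}\to\O\to 0$, obtained by precomposing the twisted Euler inclusion $\O(-2)\to\O(-1)^{\oplus 2}$ with $\O(-3)\hookrightarrow\O(-2)$, has injective left map, surjective right map, zero composite and additive ranks, yet has skyscraper homology in the middle. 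The principle is valid when the left map is injective \emph{on fibers}, which is what your frame computations actually establish (bases go to linearly independent monomials, and indeed the image of $F^*M$ and the kernel of $S^2M\to\wedge^2M$ are spanned by the very same elements $e_i^2$); so your applications are sound, but the principle should be phrased fiberwise.
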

All these statements are presumably well known, but there does not seem
to be an easily accessible reference for the third statement.

\begin{proof}
Statement 1 is obvious.\par
Regarding the first exact sequence from statement 3:
Assume first that $M$ is free over $U$ with basis $e_1,\dotsc,e_n$. Then
$\sym^2M$ (as quotient of $M\otimes M$) has the images of $(e_i\otimes e_j)_{i\le j}$ as a basis, and
the basis for $\wedge^2M$ is given by the images of $(e_i\otimes e_j)_{i< j}$.
Therefore the kernel of the map $\sym^2M\to \wedge^2M$ is also free and has the 
images of $(e_i\otimes e_i)_{1\le i\le n}$ as basis.

Now consider two open sets $U$, $V$ so that $M$ is free over each of them. The
transition matrix for the kernel bundle arises from the transition matrix for $M$
by squaring all entries.

Globally, this means that the kernel bundle is isomorphic to the Frobenius pullback of $M$.

The second exact sequence follows from the first by dualizing, replacing the dual of $M$
by $M$, and noting that exterior powers and Frobenius pullback commute with taking
duals.
\end{proof}
\begin{example}\label{divided-example}
Taking $M=Q$, the tautological quotient bundle on $\P^2$, we get exact
\[ \exseq{\O(1)}{ D^2Q}{F^*Q}\]
Now $F^*Q$ fits in an exact sequence (in fact, a minimal graded free resolution)
\[\exseq{\O(-2)}{3\O}{F^*Q}\]
hence \[h^0(F^*Q)=3,  h^1(F^*Q)=h^0(F^*Q(-1))=0, h^1(F^*Q(-1))=1.\]
This implies
\[h^0(D^2Q)=6, h^0(D^2Q(-1))=1.\]It follows that in char. 2, there is a unique map up to scalars
\[\sym^2Q\to\O(1)\]
and its kernel is $F^*Q$ which is thus uniquely embedded in $\sym^2Q$ as the set of squares.\par
Next, since $H^1\big(\O(\l)\otimes\O(1)\big)=0$ for any $\l \geq 0$, we have a short exact sequence
of graded modules
\begin{equation*}
0\to \bigoplus\limits_{\l\geq -1} H^0 \O(1+\l) \to \bigoplus\limits_{\l\geq -1} H^0 (D^2Q)(\l) \to 
\bigoplus\limits_{l\geq -1} H^0 (F^*Q)(\l) \to 0
\end{equation*}
Then the long exact sequence in Koszul cohomology \cite[1.d.4]{green-koszul} can be used to
obtain  the following minimal graded free  resolution of $D^2Q$:
\begin{equation*}
0\to \O(-2) \to \O(1)\oplus 3\O \to D^2Q \to 0
\end{equation*}
\end{example}
\section{Modifications}\label{modifications}
The following result is standard folklore (see e.g. \cite{huybrechts-lehn}[Prop. 5.2.2]):
\begin{prop}
Let $X$ be a scheme and $D\subset X$ a (possibly non-reduced) Cartier divisor.
Let $E$ be a vector bundle on $X$ and $F$ a locally $\O_D$-free quotient
of $E\otimes\O_D$. Then the kernel $E'$ of the natural surjection $E\to F$ is
$\O_X$-locally free. 
\end{prop}
\begin{proof}
The assertion is evidently local, so we can assume that $E$ and $F$ are free. 
Working locally, let $e_1,...,e_k$ be sections of $E$
mapping to a basis of $F$, let $y$ be an equation for $D$ and let $e_1, ...,e_k, ..., e_n$
be a basis of $E$ extending $e_1,...,e_k$. Then $ye_1,...,ye_k, e_{k+1}, ...,e_n$
is a basis for the locally free subsheaf $E'$.
\end{proof}

 \part{General case}
 \section{Construction}
 In this Part we work in characteristic $p>0$ and construct some rank-$n$ bundles on 
 $\P^n\times\P^n$ as described in the introduction, thus proving part (i) of the Main Theorem.
 The construction is carried out in 
 \S\S \ref{construction-base}, \ref{construction-q}, \ref{construction-qk}.
 In \S \ref{nondegeneracy} we show in some cases that our bundle is not a pullback
 from lower dimension, and in \S\ref{indecomposability} we show it is indecomposable.
 The remaining sections, which are needed in the further study of the bundle,
  study restriction on the divisor $\cA$ and jumping lines.
 \subsection{Construction: base case}\label{construction-base}
 Assume the ground field $\k$ has characteristic $p>0$ and let $F$ be the Frobenius endomorpism
 on $\P^n_h$. By \S\ref{divided-sec}, we have an  inclusion of sheaves
 \[F^*Q_h\to\sym^p Q_h\] 
 and an inclusion
 \[\P(Q_h)\to\P^n_L\times\P^n_h.\]
 Choose an embedding $\P(Q_h)\to\P^n_L\times \P^n_h$,
 and denote its image  $\cA$, which is a divisor of type $L+h$. Then we get a map, clearly surjective
 \[p_2^*F^*Q_h\to\O_{\cA}(pL),\]
 whose image on $H^0$  is the set of all $p$-th powers.
 The kernel is a rank-$n$ vector bundle on $\P^n_L\times \P^n_h$ which we denote by $E_0(-L)[n]$
 or, when $n$ is understood, by $E_0(-L)$. Thus we have exact
 \eqspl{e0-def-modif}{
 \exseq{E_0(-L)}{h^*F^*Q_h}{\O_\cA(pL)}.
 }
 \par
 From \eqref{e0-def-modif} we can compute the  total Chern class
 \[c(E_0(-L))=\frac{1+(p-1)L-h}{(1-ph)(1+pL)}.\]
 In particular for $n=2$ we have
 \[c(E_0(-L))=1+(p-1)h-L+p(p-1)h^2+pL^2,
 \]
 hence
 \eqspl{chern-n=2}{c(E_0)=1+(p-1)h+L+p(p-1)h^2+(p-1)hL+pL^2.}
 In particular, for $p=n=2$ this Chern class is symmetric.
\subsection{Construction: $q$ case}\label{construction-q} 
More generally, for any $a\geq 1, q=p^a$ we get a bundle denoted $E_0[n,q](-L)$ as the kernel of the surjection
 \eqspl{q-case}{
 p_h^*(F^a)^*(Q_h)\to\O_\cA(qL).
 }
 Thus we have exact
 \eqspl{modif-q}{
 \exseq{E_0[n,q](-L)}{h^*(F^a)^*(Q_h)}{\O_\cA(qL)}.
 }
 Dualizing we get exact
 \eqspl{modif-q-dual}{
 \exseq{p_h^*(F^a)^*(\check Q_h)}{\check E_0[n,q](L)}{\O_\cA((1-q)L+h)}
 }
 The bundles $E_0[n,p^a]$ can also be constructed by induction on $a$.
 In view of the surjection $F^*\O_{\cA}=\O_{p\cA}\to\O_\cA$ with kernel $\O_{(p-1)\cA}(-L-h)$, 
 we have an exact sequence
 \eqspl{q-case-rel}{
 \exseq{F^*E_0[n, p^a]((-p+1)L)}{E_0[n, p^{a+1}]}{\O_{(p-1)\cA}(h)}.
 }
 
 Consequently
 \eqspl{chern-q-case}{
 c(E_0[n, p^{a+1}])=F^*c(E_0[n,p^a](-p+1)L)\frac{1+h}{1+(-p+1)L}.
  }
 In particular,
 \eqspl{c1-q-case}{
 c_1(E_0(-L))=(q-1)h-L, \qquad c_1(E_0)=(q-1)h+(n-1)L.
 } 

\subsection{Construction: $q,k$ case}\label{construction-qk} 

Still more generally, let $1\leq k\leq q$. Starting from the surjection
 \[p_2^*(F^a)^*Q_h\to (F^a)^*(\O_\cA(L))=\O_{q\cA}(qL)\]
 and composing with the restriction map $\O_{q\cA}(qL)\to\O_{k\cA}(qL)$
 we get a surjection
 \[p_2^*(F^a)^*Q_h\to \O_{k\cA}(qL)\]
 The kernel is a rank-$n$ vector bundle that we denote  by $E_0[n,q,k](-L)$.
 Thus we have exact
 \eqspl{nqk-case}{
 \exseq{E_0[n,q,k](-L)}{p_2^*(F^a)^*Q_h}{ \O_{k\cA}(qL)}
 }
 with dual
 \eqspl{nqk-dual}{
 \exseq{p_2^*(F^a)^*\check Q_h}{\check E_0[n,q,k](L)}{ \O_{k\cA}((-q+k)L+kh)}.
 }
 By replacing the middle and right terms in \eqref{nqk-case}, 
 by the complexes $\O(-qh)\to (n+1)\O$ and $\O((q-k)L-kh)\to \O(qL)$ respectively,
 we can describe $E_0[n,q,k](-L)$
 as the cohomology of the following monad:
 \eqspl{nqk-monad}{
 \O(-qh)\xrightarrow{A} (n+1)\O\oplus \O((q-k)L-kh)\xrightarrow{B} \O(qL).
 } Using homogeneous coordinates
 $a_0:\dotsc:a_n$, $x_0:\dotsc:x_n$, the maps are given by
 \begin{equation*}
A=\begin{pmatrix}
a_0^q \\
\vdots \\
a_n^q \\
(a_0x_0+\dotsc +a_nx_n)^{q-k}
\end{pmatrix},\quad
B=\begin{pmatrix}
x_0^q & \hdots & x_n^q & -(a_0x_0+\dotsc +a_nx_n)^{k}
\end{pmatrix}
\end{equation*}
 Similarly $\check E_0[n,q,k][L]$ is the cohomology of the dual monad
 \eqspl{nqk-dual-monad}{
 \O(-qL)\to (n+1)\O\oplus \O(-(q-k)L+kh)\to \O(qh).
 }
 For different $k$, these bundles are related by
 \eqspl{change-k-eq}{
 \exseq{E_0[n,q,k](-L)}{E_0[n,q,k-1](-L)}{\O_\cA((q-k+1)L-(k-1)h)}
 }
 \begin{remarks}\label{remarks5}
  (i) The bundles $E_0[n,q,k]$  for various $n$ are mutually compatible in the sense that
 \eqspl{compatibility}{
 E_0[n,q,k]|_{\P^{n-1}_L\times\P^{n-1}_h}=E_0[n-1, q,k]\oplus\O.
 } 
 This follows from the fact that 
 \[Q_{\P^n}|_{\P^{n-1}}=Q_{\P^{n-1}}\oplus\O.\]
 In view of the nature of the cohomology ring of $ \P^n\times\P^n$, this implies
 \[c_i(E_0[n,q,k])=c_i(E_0[n-1,q,k]), i<n, n\geq 3,\]
 \[c_n(E_0[n,q,k])=a_nL^n+b_nh^n.\]
 (ii) In the extreme case $k=q$, $E_0[n,q,q](-L)$ is just $(F^a)^*$ of the kernel $K$
 of the natural surjection $p_h^*Q_h\to\O_\cA(L)$. But note that $p_{L*}K$ is the kernel 
 of the map $(n+1)\O\to\O(L)$, i.e. $Q_L^*$ and by comparing $c_1$
 it follows that  the natural map $p_L^*p_{L*}K\to K$ is an isomorphism. Thus
 \[E_0[n,q,q](-L)=(F^a)^*p_L^*(Q_L(-L))=\big((F^a)^*p_L^*(Q_L)\big)(-qL).\]
 (iii)  The same divisor $\cA$ can also be realized as $\P(Q_L)$, thus leading to an 
 \emph{a priori} different bundle
 as the kernel of  $p_L^*Q_L\to\O_\cA(ph)$. However we shall see below in \S\ref{A-symmetry} that the latter kernel is
 just the flip of $E_0(-L)$, i.e. pullback under factor interchange.
 \end{remarks}
 \subsection{Restriction on $\cA$}
 Note that 
 \[c_1(E_0|_{\P^n_L\times x})=(n-1)L.\]
 hence
 \[E_0|_{\P^n_L\times x}\simeq (\wedge^{n-1}E_0^*|_{\P^n_L\times x})(n-1).\]
 Restricting \eqref{modif-q} on $\cA$ and using ${\mathrm{Tor}}_1(\O_\cA, \O_\cA)=\O_\cA(-L-h)$ we get exact
 \[0\to\O_\cA((q-1)L-h)\to E_0(-L)[n,q]|_\cA\to p_2^*(F^a)^*(Q_h)|_\cA\to\O_\cA(qL)\to 0.\]
 Note that the right map is part of  $(F^a)^*$ of the relative Euler sequence on $\cA$, hence we get an exact sequence on
 $\cA$:
 \eqspl{e-on-A}{
 \exseq{\O_\cA((q-1)L-h)}{E_0(-L)[n,q]|_\cA}{((F^a)^*\Omega_{\cA/\P^n_h})(qL)}
 }
\par

 Now the pullback of the Euler sequence on $\cA$ is
 \[\exseq{((F^a)^*\Omega_{\cA/\P^n_h})(qL)}{(F^a)^*Q_h\otimes\O_{\cA}}{\O_\cA(qL)}\]
 and this shows that
 $(F^a)^*(\wedge^2Q_h)$ admits ${((F^a)^*\Omega_{\cA/\P^n_h})(2qL)}$ as quotient,
 hence the latter bundle is globally generated. 
 Combining this with \eqref{e-on-A}, we conclude
 \begin{lem}
 $E_0[n,q]((q-1)L+h)|_\cA$ is globally generated.
 \end{lem}
 In fact, the sequence \eqref{e-on-A} splits. More generally,
 for any $k\in[1,q]$, we see in the monad \eqref{nqk-monad} that the map $\O((q-k)L-kh)\to\O(qL)$
 vanishes precisely on $k\cA$, hence we can write, for some locally free $\O_{k\cA}$ module $E'$
 of rank $n-1$
 \[E_0[n,q,k](-L)|_{k\cA}=\O_{k\cA}((q-k)L-kh)\oplus E'.\]
where $E'$ is the cohomology of a monad
\eqspl{monad-e'}{
\O_{k\cA}(-qh)\to(n+1)\O_{k\cA}\to \O_{k\cA}(qL).
}Of course a similar monad also computes $E'|_\cA$.
In particular, for $k=1$ this shows that the sequence \eqref{e-on-A} splits.

 \subsection{Nondegeneracy}\label{nondegeneracy}
 \begin{cor}
 (i) Assume $q\not\equiv 1 \mod n$. Then no twist of $E$ is a pullback of a bundle from a factor $\P^n$.\par
 (ii) Assume $q=2$. Then no twist of  $E$ is  a pullback of a bundle via 
 any map $\P^n\times\P^n\to\P^m$. 
 \end{cor}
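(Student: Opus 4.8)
The plan is to read off from the Chern classes of $E$ numerical constraints that any twisted pullback must satisfy, and to show they fail. Since every twist of $E$ is a twist of the kernel $E_0(-L)$, it suffices to treat a general twist $E':=E_0(-L)\otimes\O(aL+bh)$ with $a,b\in\Z$; recall from \eqref{c1-q-case} that $c_1(E_0(-L))=(q-1)h-L$, so $c_1(E')=(na-1)L+(nb+q-1)h$.

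For (i) I would detect a pullback from a single factor by $c_1$ alone. A pullback $p_L^*G$ from $\P^n_L$ has $c_1=p_L^*c_1(G)\in\Z L$, hence vanishing $h$-coefficient; symmetrically a pullback $p_h^*G$ has vanishing $L$-coefficient. In $c_1(E')$ the $L$-coefficient $na-1\equiv-1\pmod n$ can never vanish for $n\ge 2$, so no twist of $E$ is a pullback from $\P^n_h$, unconditionally. The $h$-coefficient $nb+q-1$ vanishes only when $n\mid q-1$, i.e. $q\equiv1\pmod n$, which is excluded by hypothesis, so no twist is a pullback from $\P^n_L$ either. This disposes of (i).

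For (ii), where $q=p=2$, the idea is to rule out all maps simultaneously using $c_2$. Any morphism $f\colon\P^n_L\times\P^n_h\to\P^m$ satisfies $f^*\O_{\P^m}(1)=\alpha L+\beta h$ for some $\alpha,\beta$, and since $H^*(\P^m)$ is generated by the hyperplane class $\zeta$, each $c_i(G)$ is an integer multiple of $\zeta^i$. Thus if $E'\cong f^*G$ then $c_2(E')=f^*c_2(G)$ is an integer multiple of $(\alpha L+\beta h)^2$; viewed as a binary quadratic form in $L,h$ it would then be a scalar multiple of a perfect square, i.e. have discriminant zero. On the other hand, expanding the total Chern class $c(E_0(-L))=\dfrac{1+L-h}{(1-2h)(1+2L)}$ gives $c_2(E_0(-L))=2L^2+2h^2$, whence $c_2(E')=AL^2+BLh+Ch^2$ with
\[A=2-(n-1)a+\binom n2 a^2,\qquad B=(n-1)(a-b)+n(n-1)ab,\qquad C=2+(n-1)b+\binom n2 b^2.\]

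The crux is to show this form is never a scalar multiple of a square, i.e. that $B^2-4AC<0$ for all integers $a,b$ and all $n\ge 2$. Expanding, the $a^2b^2$, $a^2b$ and $ab^2$ terms cancel (using $\binom n2=\tfrac12 n(n-1)$), leaving
\[4AC-B^2=(n-1)(3n+1)(a^2+b^2)-2(n-1)^2ab-8(n-1)(a-b)+16,\]
a positive-definite quadratic in $(a,b)$. Minimizing (the minimum is attained at $a=1/n$, $b=-1/n$) gives the value $8+8/n>0$, so $4AC-B^2>0$ always, contradicting the vanishing discriminant forced by a pullback. The main obstacle is exactly this uniform positivity: the constraint must be checked not for a single bundle but across the whole two-parameter family of twists, and what makes it work is that the leading ($a^2b^2$) terms of $c_2$ and of the square cancel, reducing everything to the definiteness of the displayed quadratic.
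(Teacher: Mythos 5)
Your part (i) is exactly the paper's argument: by \eqref{c1-q-case} no twist of $E$ can have $c_1$ proportional to $L$ or to $h$, and your congruence bookkeeping matches the paper's. For part (ii) your proof is correct, but it takes a genuinely different route. The paper first asserts a reduction to $n=2$ (justified by restricting to a linear $\P^2\times\P^2$, where by \eqref{compatibility} the bundle only picks up trivial summands, which do not change Chern classes), and then equates \emph{both} $c_1$ and $c_2$ of $E_0(ah+bL)$ with those of a pullback, finishing by a case analysis on $u_1,u_2,a,b$. You instead stay on $\P^n\times\P^n$ for every $n$, use only $c_2$, and convert the pullback condition into vanishing of the discriminant of the binary quadratic form $c_2(E')=AL^2+BLh+Ch^2$. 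I verified your computations: for $q=2$ and any $n\ge 2$ one indeed has $c_2(E_0(-L))=2L^2+2h^2$ (with no $Lh$ term), your expressions for $A,B,C$ follow from the standard twist formula, the cubic and quartic terms in $4AC-B^2$ cancel as you say, and the minimum of the resulting positive-definite expression is $8+8/n>0$ at $(a,b)=(1/n,-1/n)$. Since $L^2$, $Lh$, $h^2$ are linearly independent in the Chow ring of $\P^n\times\P^n$ for $n\ge 2$, the strict inequality $4AC-B^2>0$ rules out every pullback, including the degenerate cases of constant maps or $m\le 1$, where $c_2(E')$ would have to vanish outright. What your route buys: it is uniform in $n$ (no reduction step, which the paper leaves terse), it needs no $c_1$ constraint and no case analysis, and it incidentally sidesteps the arithmetic slips in the paper's printed computation (the $Lh$-coefficient of $c_2(E_0(ah+bL))$ should be $1+a+b+2ab$, not $2ab+a+b$, and the $h^2$-equation should read $u_2(2a+1)^2=a^2+a+2$ with solutions $a\in\{0,-1\}$, $u_2=2$ --- the paper's conclusion survives, but its displayed numbers do not). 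What the paper's route buys is brevity: once one grants the reduction to $n=2$, the rank-2 case analysis is a few lines.
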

 \begin{proof}
 (i) By \eqref{c1-q-case}, there is no twist of $E$ whose $c_1$ is a multiple of $L$ or $h$.\par
 (ii) It suffices to prove this for $n=2$. 
 Then we have
 \[c(E_0(ah+bL))=1+(2a+1)h+(2b+1)L+(2+a+a^2)h^2+(1+2ab+a+b)hL+(2+b+b^2)L^2.\]
 Suppose there is a map $\phi:\P^2_h\times\P^2_L\to\P_\l^m$ with  $\phi^*(\l)=\alpha h+\beta L$
 and a bundle $F$ on $\P^m$ with $c(F)=1+u_1\l+u_2\l^2$. We may assume $u_1\in\{0,1\}$. Then 
 equating $c_1$ we see that $u_1=0$ is impossible. For $u_1=1$ we get
 \[\alpha=2a+1, \beta=2b+1\]
 and then
 \[u_2(\alpha h+\beta L)^2=(2+a+a^2)h^2+2ab+a+b)Lh+(2+b+b^2)L^2.\]
 Then equating coefficients of $h^2$ we get
 \[u_2(2a+1)^2=a^2+a+1\]
 which is evidently possible only for $u_2=1, a\in\{0,1\}$. Similarly $b\in\{0,1\}$.
 Then equating coefficients of $Lh$ yields a contradiction.\par
 Note that in place of $\P^m$ we could have any variety with Chow groups $\Z$
 in degrees 1, 2.
 \end{proof}
%
%

 \subsection{Splitting type along $L$ lines}\label{jumping-sec}
 
 The restriction of $E_0[n,q](-L)$ on a fibre $\P^n_L\times x$ fits in an exact sequence
 \[\exseq{E_0(-L)[n,q]|_{\P^n_L\times x}}{n\O}{\O_A(qL)}\]
 where $A=\cA\cap(\P^n_L\times x)$ is a hyperplane  in $\P^n_L\times x$ . 
 Now consider  restriction on lines $f\subset\P^n_L\times x$  (such lines are called $L$ lines).
 Assume first $f\not\subset\cA$ 
 and let $p=f.A$.  Then we get exact
 \[\exseq{E_0(-L)[n,q]|_f}{n\O_f}{\O_p}.\]
 Therefore clearly 
 \[E_0[n,q](-L)|_f=(n-1)\O_f\oplus \O_f(-1).\]
 Now consider a line $f\subset A$. Then we get exact
 \[0\to{\mathrm{Tor_1}}(\O_A(qL), \O_f)\to E_0[n,q](-L)|_f\to n\O_f\to\O_f(qL)\to 0.\]
 Now the $\mathrm{Tor}$ in question is obviously locally free, of rank $1$,
 hence calculating determinants of all sheaves in this sequence immediately shows
that it is isomorphic to $\O_f(q-1)$.
 The right map is given by the $q$-th powers, hence its kernel is $(F^a)^*Q_L^*|_f=(n-2)\O_f\oplus\O_f(-q)$.
 Therefore we have exact
 \[\exseq{\O_f(q-1)}{E_0(-L)|_f}{(n-2)\O_f\oplus\O_f(-q)}.\]
 This extension obviously splits, so we have
 \eqspl{e-on-f}{ E_0[n,q](-L)|_f=\O_f(q-1)\oplus(n-2)\O_f\oplus\O_f(-q).
 }
 \begin{rem}\label{jumping-rem}
 Note that it follows from \eqref{compatibility} that the jumping lines for $E_0[n-1,q]$
 are precisely the jumping lines for $E_0[n,q]$ contained in $\P^{n-1}_L\times x$.
 \end{rem}
 \subsection{Indecomposability}\label{indecomposability}
 \begin{cor}
 The  bundles $E_0[n,q, k]$ are indecomposable.
 \end{cor}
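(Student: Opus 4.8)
The plan is to reduce the statement to a single fibre and prove indecomposability there. Since indecomposability is invariant under twisting, it suffices to treat $E:=E_0[n,q,k](-L)$. Restricting the defining sequence \eqref{nqk-case} to a fibre $\P^n_L\times x$ and identifying $p_2^*(F^a)^*Q_h|_{\P^n_L\times x}$ with $n\O$ gives a rank-$n$ bundle
\[E_x:=\ker\big(n\O\to\O_{kA}(qL)\big)\quad\text{on }\P^n_L,\]
where $A=\cA\cap(\P^n_L\times x)$ is the hyperplane $\P(Q_{h,x})$ and $kA$ its $k$-th thickening. First I would note that a nonzero decomposition $E=E_1\oplus E_2$ restricts to a nonzero decomposition $E_x=E_1|_{\P^n_L\times x}\oplus E_2|_{\P^n_L\times x}$, so it is enough to prove $E_x$ indecomposable for one (hence every) $x$. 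I would also record that $H^0(E_x)=0$: the $n$ defining sections are $\ell_1^q,\dots,\ell_n^q$ for a basis $\ell_1,\dots,\ell_n$ of $Q_{h,x}$ (the homogeneous coordinates on $A$), and their images in $H^0(\O_A(q))$ are $n$ distinct monomials, hence linearly independent, so $\k^n\to H^0(\O_{kA}(qL))$ is injective.

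The technical heart is the splitting type of $E_x$ on every line $f\subset\P^n_L$. For $f\not\subset A$, write $p=f\cap A$ (a single reduced point, since the common zero of the $\ell_i$ lies off $A$). In a local coordinate $t$ at $p$ one has $\ell_i|_f=\ell_i(p)+(\text{higher order})$, and because $q=p^a$ the Frobenius identity gives $\ell_i^q|_f=\ell_i(p)^q+(\text{terms of order}\ge t^q)$; as $k\le q$ these vanish mod $t^k$, so the map $n\O_f\to\O_{kA}(qL)|_f=\O_{kp}$ is the \emph{constant} map $e_i\mapsto\ell_i(p)^q$. It factors through a split surjection $n\O_f\to\O_f$ followed by $\O_f\to\O_{kp}$, whence $E_x|_f\cong(n-1)\O_f\oplus\O_f(-k)$; in particular no line off $A$ jumps. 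For $f\subset A$ one computes $\mathrm{Tor}_1(\O_{kA}(qL),\O_f)=\O_f(q-k)$ and $\O_{kA}(qL)|_f=\O_f(q)$, while the quotient map $n\O_f\to\O_f(q)$ has kernel $(n-2)\O_f\oplus\O_f(-q)$ exactly as in \eqref{e-on-f}; the extension splits for degree reasons, giving $E_x|_f\cong\O_f(q-k)\oplus(n-2)\O_f\oplus\O_f(-q)$. Thus the jumping locus of $E_x$ is precisely the set of lines contained in $A$ (and is empty when $k=q$).

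Now suppose $E_x=G_1\oplus G_2$ with $\rk G_i=s_i\ge1$. On a general line the splitting $\{0^{n-1},-k\}$ has a single negative entry, so exactly one factor, say $G_1$, is trivial on the general line; hence $c_1(G_1)=0$ and $G_1$ has degree $0$ on \emph{every} line. On $f\subset A$ the piece $G_1|_f$ is then a degree-$0$ sub-multiset of $\{q-k,0^{n-2},-q\}$; since $q-k>0$, $-q<0$ and $(q-k)+(-q)=-k\neq0$ for $k<q$, the only option is $G_1|_f=\{0^{s_1}\}$ (and $s_1=n-1$ is impossible, as no $(n-1)$-element sub-multiset has sum $0$). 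Thus $G_1$ is trivial on every line of $A$. By upper-semicontinuity of the splitting type, the jumping lines of $G_1$ lie among those of $E_x$, i.e. among lines in $A$, where $G_1$ is trivial; hence $G_1$ is trivial on every line of $\P^n_L$, so by the classical fact that a bundle on $\P^n$ trivial on every line is trivial, $G_1\cong\O^{s_1}$. But then $h^0(E_x)\ge s_1>0$, contradicting $H^0(E_x)=0$. This proves $E_x$, and therefore $E$, indecomposable; the case $k=q$ is identical except that $E_x$ is already uniform of type $\{0^{n-1},-q\}$, so $G_1$ is automatically trivial on all lines.

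The step I expect to be most delicate is the line-by-line splitting analysis, and especially the verification that lines off $A$ never jump: this is exactly where the positive-characteristic structure enters, through $\ell_i^q|_f=\ell_i(p)^q+b_i^qt^q+\cdots$ together with $t^q\equiv0\ (\mathrm{mod}\ t^k)$, which collapses the restricted map to a constant. Everything afterward is the combinatorial multiset bookkeeping plus the standard triviality criterion, so once the splitting types on all lines are in hand the argument is uniform in $n$, $q$ and $k$.
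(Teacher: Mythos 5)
Your proof is correct, but it takes a genuinely different route from the paper's. The paper argues by a double induction: the base case $n=2$, $k=1$ follows from the mere existence of jumping lines (a decomposable rank-$2$ bundle would be a sum of line bundles, with constant splitting type on all lines); the induction on $n$ uses the compatibility relation \eqref{compatibility} together with uniqueness of decomposition into indecomposables, so that a nontrivial splitting would force an $\O$-summand and hence $H^0\neq 0$, contradicting the injectivity of $H^0(p_2^*F^{a*}Q_h)\to H^0(\O_\cA(qL))$; and the passage to $k>1$ is a separate induction on $k$ via the extension \eqref{change-k-eq}. You instead work entirely on a single fibre $\P^n_L\times x$: you extend the paper's splitting-type computation (done in \S\ref{jumping-sec} only for $k=1$, cf. \eqref{e-on-f}) to all $1\le k\le q$, obtaining $(n-1)\O_f\oplus\O_f(-k)$ off $A$ --- the key positive-characteristic point being that $\tilde\ell_i^{\,q}\equiv\ell_i(p)^q\pmod{t^k}$ because $k\le q$, so the restricted map is constant --- and $\O_f(q-k)\oplus(n-2)\O_f\oplus\O_f(-q)$ on lines in $A$; then a degree--multiset bookkeeping shows that any direct summand with $c_1=0$ must be trivial on every line, hence trivial by the standard criterion on $\P^n$, contradicting $H^0(E_x)=0$ (which you verify by the same $q$-th-power injectivity the paper uses, just fibrewise). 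Your approach buys several things: it proves the stronger statement that every fibre restriction $E_x$ is already indecomposable on $\P^n$; it is uniform in $n$, $q$, $k$ with no induction; and it bypasses the paper's induction on $k$, whose key assertion --- that decomposability of $E_0[n,q,k]$ would force decomposability of the extension $E_0[n,q,k-1]$ --- is arguably the least transparent step of the paper's argument. What the paper's route buys is brevity: it reuses facts already in place (compatibility, the $k=1$ jumping lines, vanishing of $H^0$) and never needs the splitting types for $k>1$. The only points you leave implicit, both routine, are that restricting the defining sequence \eqref{nqk-case} to a fibre is exact (no $\mathrm{Tor}_1$, since the fibre is not contained in $\cA$) and the classical fact that a bundle on $\P^n$ trivial on all lines is trivial.
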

 \begin{proof} First for $n=2, k=1$ this simply follows
 from the fact that $E_0[2,q,1]$ has jumping lines, e.g. $\cA\cap (\P^2_L\times x)$. 
  For general $n$ and $k=1$ we use induction on $n$.
  By \eqref{compatibility} and induction, if $E_0[n, q, 1]$ is decomposable it must split off an $\O$ factor,
  hence $H^0(E_0[n, q, 1])\neq 0$. But the map $H^0(p_2^*F^*Q_h)=F^*V\to H^0(\O_\cA(pL))=\sym^pV$ where $V=H^0(\O(L))$ is
  clearly injective, hence $H^0(E_0[n, q, 1])=0$.\par
  Finally for  $k>1$ we can use induction on $k$: by \eqref{change-k-eq}, if $E_0[n,q,k]$ were decomposable
  then so too would be its extension $E_0[n,q,k-1]$.
 \end{proof}
 \section{Cohomology}
 Here we fix $n,q$ and  assume that $E_0(-L)=E_0[n,q](-L)$ is given by the elementary modification 
 \eqref{modif-q}, hence the dual $\check E_0$ 
 sits in an exact sequence
 \eqspl{modif-for-e*}{\exseq{(p_h^*F^{a*}\check Q_h)(-L)}{\check E_0}{\O_{\cA}(-qL+h)}.}
 Twisting, we get
 \[\exseq{(p_h^*F^{a*}\check Q_h)((q-1)L+qh)}{\check E_0(qL+qh)}{\O_{\cA}((q+1)h)}.\]
 Note $((F^a)^*\check Q_h)(qh)=\wedge^{n-1}(F^a)^*Q_h$ is globally generated, as is $\O_\cA((q+1)h)$.
 Now the exact sequence
 \eqspl{wedge}{\exseq{(n+1)\O(-qh)}{\frac{(n+1)n}{2}\O}{\wedge^{n-1}(F^a)^*Q_h}}
 shows that $H^1( (F^{a*}\check Q_h)((q-1)L+qh))=0$ if $n>2$ or $q\leq n$, 
 hence the map $H^0(\check E_0(qL+qh))\to H^0(\O_{\cA}((q+1)h))$ is surjective, hence
 $\check E_0(qL+qh)$ is globally generated and has $H^1=0$ if $n>2$ or 
 if $q\leq n\leq 2$.
 \par
 Furthermore, \eqref{wedge} also shows that $H^i((\wedge^{n-1}(F^a)^*Q_h)(th)=0$
 unless $i=0$ or $i=n-1$ and $t<q-n$. Hence we conclude
 \begin{lem}
 For $i>0$ we have \[H^i(\check E_0[n,q]((q+s)L+(q+t)h))=0\] 
 unless  $i=n-1$ and either  $s<q-n$ or $t\leq -q-n$.
 Moreover   $\check E_0[n,q](sh+(t-1)L)$ is globally generated fora all  $s\geq q, t\geq q$.
 \end{lem}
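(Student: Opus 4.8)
The lemma makes two assertions about $\check E_0[n,q]$: a vanishing statement for higher cohomology of twists, and a global generation statement. My plan is to reduce both to facts already established for the globally generated bundle $\wedge^{n-1}(F^a)^*Q_h$ via the two short exact sequences already in hand, namely the dual modification
\[\exseq{(F^{a*}\check Q_h)(-L)}{\check E_0}{\O_{\cA}(-qL+h)}\]
and the Koszul-type resolution \eqref{wedge}. The key observation, which I would exploit throughout, is that $(F^{a*}\check Q_h)(qh)=\wedge^{n-1}(F^a)^*Q_h$, so twisting the dual modification converts $\check E_0$ into something built from $\wedge^{n-1}(F^a)^*Q_h$ and a line bundle on $\cA$, both of whose cohomology is computable.

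First I would pin down the cohomology of $M:=\wedge^{n-1}(F^a)^*Q_h$ twisted by $th$ and $sL$. From \eqref{wedge}, which presents $M$ as the cokernel of $(n+1)\O(-q)\to\frac{(n+1)n}{2}\O$, the long exact sequence shows that $H^i(M(th))$ vanishes for $i\neq 0$ except possibly when $i=n-1$ and the twist $t$ is small enough that $H^{n-1}((n+1)\O(-q+t)h)$ survives on $\P^n_h$, i.e. $t<q-n$; since $M$ is a pullback from the $h$-factor, tensoring with $sL$ just tensors cohomology with $H^\bullet(\O_{\P^n_L}(sL))$, which is concentrated in degree $0$ for $s\ge 0$. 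This gives precise control over $H^i(M((s)L+(t)h))$, and I expect this bookkeeping to account for the two exceptional cases $s<q-n$ and $t\le -q-n$ in the statement (the shift by $q$ in the indexing and the $(q-1)L$ twist appearing in the dual modification explain the apparent asymmetry between the $s$ and $t$ bounds).

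Next, for the vanishing of $H^i(\check E_0((q+s)L+(q+t)h))$, I would twist the dual modification by $(q+s)L+(q+t)h$ and run its long exact cohomology sequence. The middle-left term becomes a twist of $M$, whose cohomology I have just computed, and the quotient term is a line bundle $\O_\cA(\ast)$ on the divisor $\cA$, whose cohomology I would compute from the structure sequence $\exseq{\O(-L-h)}{\O}{\O_\cA}$. Comparing the two, the only surviving higher cohomology should come from the degree-$(n-1)$ contribution of $M$, giving the claimed constraint $i=n-1$ with $s<q-n$ or $t\le -q-n$; the restriction $n>2$ or $q\le n$ noted just before the lemma is what kills the competing $H^1$ contribution from the $(F^{a*}\check Q_h)((q-1)L+qh)$ piece, so I would carry that hypothesis along.

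Finally, for global generation of $\check E_0(sh+(t-1)L)$ when $s,t\ge q$, I would again use the dual modification: the sub $(F^{a*}\check Q_h)(\ast)$ becomes a nonnegative twist of the globally generated $M$, hence globally generated, and the quotient $\O_\cA(\ast)$ is a nonnegative twist of $\O_\cA$, hence globally generated as well. Global generation of an extension follows once the connecting map on global sections is surjective, which is guaranteed by the $H^1$-vanishing of the sub established in the previous step (this is exactly the surjectivity argument already run for the special twist $qL+qh$). \textbf{The main obstacle} I anticipate is not conceptual but arithmetic: keeping the many twist parameters $(s,t,q,n)$ consistent across the two exact sequences so that the exceptional ranges match the stated bounds exactly, and verifying that the single surviving $H^{n-1}$ does not leak into a spurious lower-degree group through the connecting maps. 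Care with the $p\neq 2$ versus $p=2$ distinction for the symmetric/exterior identifications from the Lemma of \S\ref{divided-sec} may also be needed, but since the argument only uses $\wedge^{n-1}$ and its global generation, I expect the characteristic to play no essential role here.
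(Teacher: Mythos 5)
Your proposal reproduces the paper's own derivation essentially verbatim: the lemma is obtained there precisely by twisting the dual modification sequence \eqref{modif-for-e*}, computing the cohomology of the sub term via the identification $(F^{a*}\check Q_h)(qh)=\wedge^{n-1}(F^a)^*Q_h$ together with the resolution \eqref{wedge} (plus the K\"unneth factorization across the two factors), and deducing global generation from global generation of sub and quotient together with $H^1$-vanishing of the sub. One caveat, which your plan shares with the paper's own statement: at the boundary $t=q$ the quotient becomes $\O_\cA(-L+(s+1)h)$, which is not a nonnegative twist of $\O_\cA$ and in fact has no sections at all, so the extension argument as written only establishes global generation for $t\geq q+1$.
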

  Note that $\check Q_h=(\wedge^{n-1}Q_h)(-h)$ hence
  \[p_2^*(F^{a*})(\check Q_h)=(p_2^*(F^{a*}\wedge^{n-1}Q_h))(-qh)\]
  and
  \[p_2^*(F^{a*})(Q_h)=(p_2^*(F^{a*}\wedge^{n-1}\check Q_h))(-qh)\] 
  
  Hence it follows from \eqref{modif-q-dual} that
  \eqspl{vanishing-from-modif}{
  H^i(\check E_0[n,q](sh+tL))=0, i>0, {\textrm{ provided}}\ \   s\geq q-n\ \  {\textrm{or}}\ \   t\geq q-n.
  }
  Moreover
  \eqspl{gg-from-modif}{
  \check E_0[n,q](sh+(t-1)L) \ \ {\textrm{is globally generated}}\ \ \forall  s\geq q, t\geq q.
  }
  As for $E_0[n,q]$, note that
  \[E_0[n,q]=(\wedge^{n-1}\check E_0[n,q])((n-1)L+(q-1)h).\]
  and by taking $\wedge^{n-1}$ of \eqref{modif-for-e*} and twisting, we have exact
  \eqspl{wedge-for e}{
 \exseq{(p_2^* (F^{a*}Q_h)(-2(n-1)L-h)}{E_0[n,q]}{\O_{(n-1)\cA}(-(n-1)(q-1)L-(n+q-2)h)}.
  }
  Consequently we have
  \begin{lem}
$ E_0[n,q](sL+th)$  is globally generated and has vanishing $H^i$, 
$ \forall i>0, s\geq (n-1)(q-1), t\geq n+q-2$.
 \end{lem}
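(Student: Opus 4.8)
The natural tool is the exact sequence \eqref{wedge-for e}. Twisting it by $sL+th$ gives
\[\exseq{A}{E_0[n,q](sL+th)}{C}\]
with $A=(p_2^*F^{a*}Q_h)\big((s-2(n-1))L+(t-1)h\big)$ and $C=\O_{(n-1)\cA}\big((s-(n-1)(q-1))L+(t-(n+q-2))h\big)$. The plan is to establish higher-cohomology vanishing and global generation for each of the two outer terms under the stated bounds, and then to read off the statement for the middle term: vanishing from the long exact cohomology sequence, and global generation from the standard fact that an extension of a globally generated sheaf by a globally generated sheaf with vanishing $H^1$ is again globally generated.

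For the sub $A$: it is an external product $p_1^*\O((s-2(n-1))L)\otimes p_2^*\big((F^{a*}Q_h)((t-1)h)\big)$, so K\"unneth reduces its cohomology to that of the two factors. The $\P^n_L$-factor is a line bundle with no higher cohomology, so for $i>0$ the group $H^i(A)$ is controlled by $H^i\big((F^{a*}Q_h)((t-1)h)\big)$; applying $F^{a*}$ to the tautological sequence yields $\exseq{\O((t-1-q)h)}{(n+1)\O((t-1)h)}{(F^{a*}Q_h)((t-1)h)}$, and a short check shows the higher cohomology vanishes once $t\geq n+q-2$ (using $n\geq2$). Global generation of $A$ holds as soon as both factors are globally generated, i.e. once $s\geq 2(n-1)$ and $t\geq 1$.

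For the quotient $C$: global generation is immediate, since $C$ is a quotient of the restriction to $(n-1)\cA$ of the line bundle $\O((s-(n-1)(q-1))L+(t-(n+q-2))h)$ on $\P^n_L\times\P^n_h$, whose two coefficients are nonnegative under the hypotheses and which is therefore globally generated. For the cohomology I would filter $\O_{(n-1)\cA}$ by the powers of the ideal of $\cA$, with graded pieces $\O_\cA(-j(L+h))$ for $0\leq j\leq n-2$, reducing to the vanishing of $H^i(\O_\cA(a_jL+b_jh))$, $i>0$, where $a_j=(s-(n-1)(q-1))-j$ and $b_j=(t-(n+q-2))-j$. Pushing forward along the $\P^{n-1}$-bundle $\pi\colon\cA=\P(Q_h)\to\P^n_h$, with $\O_\cA(L)=\O_{\P(Q_h)}(1)$ and $\O_\cA(h)=\pi^*\O(1)$, the hypothesis $s\geq(n-1)(q-1)$ forces $a_j\geq-(n-2)$, so only two cases occur: when $-(n-1)\leq a_j\leq-1$ the sheaf is $\pi$-acyclic and all cohomology vanishes, and when $a_j\geq0$ the pushforward is $(\Sym^{a_j}Q_h)(b_jh)$, for which the two-term resolution $\exseq{\O(b_j-1)^{\oplus r_1}}{\O(b_j)^{\oplus r_2}}{(\Sym^{a_j}Q_h)(b_j)}$ (coming from the fact that $Q_h$ is a trivial bundle modulo a line bundle) kills every $H^i$, $i>0$, because $b_j\geq-(n-2)$. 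This gives $H^i(C)=0$ for $i>0$, and combined with $H^i(A)=0$ it yields $H^i(E_0[n,q](sL+th))=0$ for $i>0$.

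The delicate point, and the main obstacle, is the global generation of the sub $A$: its $L$-degree $s-2(n-1)$ is nonnegative exactly when $s\geq 2(n-1)$, which is implied by $s\geq(n-1)(q-1)$ only for $q\geq3$. For $q=2$ the stated bound $s\geq n-1$ does not force $A$ to be globally generated, so the naive extension argument breaks down and one must argue more carefully — for instance by showing directly that the sections of $E_0[n,q](sL+th)$ lifting sections of $C$ that vanish to suitable order along $\cA$ already generate the sub, or by replacing \eqref{wedge-for e} with a sequence better adapted to $q=2$. The cohomology vanishing, by contrast, goes through uniformly in $q$ by the argument above.
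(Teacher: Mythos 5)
Your plan is in fact the paper's own plan: the paper's entire ``proof'' of this lemma is the word ``Consequently'' following \eqref{wedge-for e}, i.e.\ exactly the twist-and-extend argument you set up, and your implementation of the vanishing half (K\"unneth for the sub; the filtration of $\O_{(n-1)\cA}$ with graded pieces $\O_\cA(-j(L+h))$ and the pushforward along $\cA=\P(Q_h)\to\P^n_h$ for the quotient) is internally correct. The obstruction you flag is also real \emph{relative to that sequence}: for $q=2$ and $(n-1)\le s<2(n-1)$ the sub $(p_2^*F^{a*}Q_h)((s-2(n-1))L+(t-1)h)$ has negative $L$-degree, hence is not globally generated, and the extension argument for global generation of the middle term collapses. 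Since you leave that case open, your proposal does not prove the statement for $q=2$ --- which is precisely the case $n=q=2$ that the paper is chiefly interested in --- so as it stands it is an incomplete proof (a gap it shares with the paper's own one-word argument).

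There is, however, a more basic defect that you did not catch: the sequence \eqref{wedge-for e} is itself false, so both halves of your argument, including the vanishing half, concern a sequence that does not exist. First Chern classes already rule it out: the sub contributes $c_1=-2n(n-1)L+(q-n)h$, a line bundle on the divisor $(n-1)\cA$ contributes $(n-1)(L+h)$, and the sum $(n-1)(1-2n)L+(q-1)h$ differs from $c_1(E_0[n,q])=(n-1)L+(q-1)h$ for every $n\ge 2$; concretely, for $n=q=2$ it contradicts the sequence $0\to(p_2^*F^*Q_h)(-h)\to E_0\to\O_\cA(-L+2h)\to 0$ established in \cite{p2p2-II}. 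The error is structural: an elementary modification $0\to A\to E\to\O_\cA(\lambda)\to 0$ of rank-$n$ bundles is locally $\mathrm{diag}(1,\dots,1,f)$, so $\wedge^{n-1}A\to\wedge^{n-1}E$ is locally $\mathrm{diag}(f,\dots,f,1)$, and its cokernel is the rank-$(n-1)$ locally free $\O_\cA$-module $\wedge^{n-2}K\otimes\O_\cA(\lambda)$, $K=\ker(E|_\cA\to\O_\cA(\lambda))$, \emph{not} a line bundle on the thickening $(n-1)\cA$. Applying this to \eqref{modif-for-e*} and twisting gives the correct sequence
\[
\exseq{(p_2^*F^{a*}Q_h)(-h)}{E_0[n,q]}{F^{a*}\Omega_{\cA/\P^n_h}((q+1)L)},
\]
whose quotient map is exactly the surjection in \eqref{e-on-A} twisted by $L$, and which for $n=q=2$ recovers the displayed sequence above since $F^*\Omega_{\cA/\P^2_h}(3L)=\O_\cA(-L+2h)$. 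Ironically, this repair dissolves the $q=2$ obstruction you found: after twisting by $sL+th$ the sub becomes $(p_2^*F^{a*}Q_h)(sL+(t-1)h)$, which under the stated bounds is globally generated with vanishing $H^{i}$, $i>0$; the real work shifts to the quotient $F^{a*}\Omega_{\cA/\P^n_h}((s+q+1)L+th)$, where global generation follows from the paper's earlier observation that $F^{a*}\Omega_{\cA/\P^n_h}(2qL)$ is globally generated (one needs $s+q+1\ge 2q$, i.e.\ $s\ge q-1$, implied by $s\ge(n-1)(q-1)$), and the higher cohomology is killed using the Frobenius pullback of the relative Euler sequence on $\cA$ together with restriction sequences for $\cA\in|L+h|$. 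So a correct proof must begin by replacing \eqref{wedge-for e}; neither your write-up nor the paper does this.
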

  
 Next we study restrictions of our bundles on a fibre $\P^n_L\times x$. We have exact
 \[\exseq{E_0(s-1)|_{\P^n\times x}}{n\O(s)}{\O_A(q+s)}, \quad A=\cA\cap(\P^n\times x)\]
 with dual
 \[\exseq{n\O(s)}{\check E_0(s+1)}{\O_A(-q+s+1)}\]
 which implies that
 \[H^i(\check E_0(t))=0\ \  \forall t\geq q-n, i>0\]
 and moreover $\check E_0(t)$ is globally generated for $t\geq q-1$.
Now taking $\wedge^{n-1}$ of the exact sequence
\[\exseq{n\O(q-1)}{\check E_0(q)}{\O_A}\]
leads to
\[\exseq{n\O((n-1)(q-1))}{(\wedge^{n-1}\check E_0)((n-1)q)}{\O_{(n-1)A}}.\]
Because $E_0=(\wedge^{n-1}\check E_0)(n-1)$, we conclude
\eqspl{e-vanishing-on-fibre}{
H^i(E_0(s))=0, \forall i>0, s\geq(n-1)(q+1).
} 
 
   \section{A-Symmetry}\label{A-symmetry}
The result of this section is used in \cite{p2p2-II} but not in the rest of this paper. 
  Here $E$ is given by the modification sequence over $\P^n_h$, where $q=p^a$, 
  \[\exseq{\check E_0(h)}{p_h^*F^{a*}Q_h}{\O_{\cA}(qL),}\]
  where $\check E_0=E_0(-L)$. In the applications, $q=2$.
  \begin{prop} Assume $q=p=2$. Then
  we have exact
  \eqspl{modif-over-L}{\exseq{\iota^*(\check E_0(h))}{p_L^*F^{a*}Q_L}{\O_{\cA}(qh)}
  }
  or equivalently
  \[\exseq{\check E_0(h)}{p_h^*F^{a*}Q_h}{\O_{\iota^*\cA}(qL)}.\]
  where again the right map is the canonical one with image the set of $q$th powers
  and $\iota$ is the involution exchanging factors (note $\iota^*(\check E_0(h))=(\iota^*\check E_0)(L)=\iota^*E_0$).
  \end{prop}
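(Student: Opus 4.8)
The plan is to deduce the Proposition from a single symmetry property of the divisor $\cA$, after which both displayed sequences become formal consequences of the base construction together with its naturality under $\iota$. Observe first that the two sequences are interchanged by $\iota^*$: since $\iota^*\O(L)=\O(h)$, $\iota^*Q_h\cong Q_L$, and (as recorded in the statement) $\iota^*(\check E_0(h))=\iota^*E_0$, pulling back the defining modification sequence $\exseq{\check E_0(h)}{p_h^*F^{a*}Q_h}{\O_{\cA}(qL)}$ along $\iota$ yields $\exseq{\iota^*(\check E_0(h))}{p_L^*F^{a*}Q_L}{\O_{\iota^*\cA}(qh)}$. Thus it suffices to prove that the canonical $q$-th-power quotient built from the $\P^n_L$-factor has target $\O_{\cA}(qh)$ on the \emph{same} $\cA$; equivalently, that $\cA$ carries compatibly the two projective-bundle structures that $\iota$ exchanges.

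So first I would prove the key lemma that $\cA$, which by construction is $\P(Q_h)$ over $\P^n_h$ with relative hyperplane class $\O_\cA(L)$, is \emph{also} $\P(Q_L)$ over $\P^n_L$ with relative hyperplane class $\O_\cA(h)$ (this is the assertion anticipated in Remark (iii)). To see this, start from the structure sequence $\exseq{\O(-L)}{\O(h)}{\O_\cA(h)}$, obtained by twisting $\O(-\cA)=\O(-L-h)$ by $h$, and push it forward along $p_L$. Using $p_{L*}\O(h)=V\otimes\O$, $p_{L*}\O(-L)=\O(-1)$, and $R^1p_{L*}\O(-L)=0$, one obtains $\exseq{\O(-L)}{V\otimes\O}{p_{L*}\O_\cA(h)}$, in which the left-hand map is the tautological inclusion twisted by the bilinear form defining $\cA$. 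Because $\cA$ is smooth, this form is nondegenerate, so the map is a nowhere-vanishing sub-line-bundle inclusion, isomorphic to the Euler inclusion; hence by its rank and first Chern class the cokernel $p_{L*}\O_\cA(h)$ is the tautological quotient $Q_L$, and $\cA=\P(Q_L)$ with relative $\O(1)=\O_\cA(h)$, symmetric to the defining description over $\P^n_h$.

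Granting the lemma, I would run the base construction verbatim over the $\P^n_L$-factor: the tautological quotient $p_L^*Q_L|_\cA\to\O_\cA(h)$, pulled back by $F^a$ and composed with the $q$-th-power inclusion $F^{a*}Q_L\to\sym^q Q_L$, gives a surjection $p_L^*F^{a*}Q_L|_\cA\to\O_\cA(qh)$ whose sections are exactly the $q$-th powers, and this extends to $p_L^*F^{a*}Q_L\to\O_\cA(qh)$ on all of $\P^n_L\times\P^n_h$ by the argument of \eqref{e0-def-modif} and \eqref{q-case}. This is precisely the first displayed sequence, and its kernel is the $\P^n_L$-analogue of $\check E_0(h)$. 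Finally, since the two descriptions of $\cA$ furnished by the lemma are interchanged by the factor-exchange $\iota$ (so that $\iota^*\cA=\cA$ and $\iota$ carries the $q$-th-power map of one factor to that of the other), the sequence just produced agrees with the $\iota^*$-pullback computed in the first paragraph; by uniqueness of the canonical map and of its kernel, that kernel is $\iota^*(\check E_0(h))=\iota^*E_0$.

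The main obstacle is the key lemma — the symmetry of $\cA$ — and within it the identification of the connecting map $\O(-L)\to V\otimes\O$ with the Euler inclusion. Everything turns on translating smoothness of $\cA$ into nondegeneracy of the defining bilinear form, so that this map is a genuine sub-bundle and its cokernel is forced to be $Q_L$ rather than merely a torsion-free quotient. The remaining bookkeeping — that the relative hyperplane classes are $\O_\cA(L)$ for $p_h$ and $\O_\cA(h)$ for $p_L$, and that the two $q$-th-power maps correspond under $\iota^*$ so that $\iota^*\cA=\cA$ — is exactly the content of this symmetry, and amounts to matching the tautological quotients on the two sides.
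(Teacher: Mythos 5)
Your key lemma --- that the smooth divisor $\cA$ also carries a second ruling $\cA\cong\P(Q_L)\to\P^n_L$ with relative hyperplane class $\O_\cA(h)$ --- is correct, and your proof of it is sound: pushing $\exseq{\O(-L)}{\O(h)}{\O_\cA(h)}$ forward along $p_L$ gives $\exseq{\O(-1)}{H^0(\O(h))\otimes\O}{p_{L*}\O_\cA(h)}$, smoothness of $\cA$ is equivalent to nondegeneracy of the defining bilinear form $B$, so the left map is a subbundle inclusion equivalent to the Euler inclusion and $p_{L*}\O_\cA(h)\cong Q_L$. Note this is a genuinely different route from the paper's proof, which never constructs the second ruling this way; the paper instead reduces to $n=2$, computes splitting types of $\check E_0$ on lines $M\subset x\times\P^n_h$ with $M\not\subset\cA$, quotes the companion paper for the uniqueness of the jumping line in each fibre, and concludes that the jumping-line divisors in the two rulings coincide ($\cA_L=\cA_h$). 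Your lemma does yield, as you say, an exact sequence $\exseq{K}{p_L^*F^{a*}Q_L}{\O_\cA(qh)}$ with $K$ a rank-$n$ bundle.

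The gap is in your final step, and it sits exactly at the crux of the proposition. You identify $K$ with $\iota^*(\check E_0(h))$ by asserting $\iota^*\cA=\cA$, offered as a consequence of the lemma (``the two descriptions of $\cA$ are interchanged by the factor-exchange $\iota$''). This neither follows from the lemma nor is true in general: for $\iota$ to be an involution of $\P^n_L\times\P^n_h$ one must fix an identification of the two factors, and having done so, $\cA$ is the zero locus of a nondegenerate form $B$ while $\iota^*\cA$ is the zero locus of the transpose $B^t$; these agree only when $B^t$ is proportional to $B$, which fails for generic $B$. Since $\iota^*\cA$ is itself a smooth $(1,1)$-divisor, it carries both rulings of your lemma just as $\cA$ does, so the lemma cannot distinguish the two --- and by your own first-paragraph computation, $\iota^*(\check E_0(h))$ is the kernel of the canonical map onto $\O_{\iota^*\cA}(qh)$, so equating it with $K$ is precisely the nontrivial content, not bookkeeping. (A confirming symptom: if $\iota^*\cA=\cA$ held for every smooth $\cA$, the paper's deduction that $\cA$ is symmetric \emph{whenever} $E_0\cong\iota^*E_0$ would be unconditional, which it is not.) To salvage your approach you must actually construct an involution adapted to $\cA$ --- for instance, use $B$ to identify $\P^n_h$ with the dual projective space of $\P^n_L$, so that $\cA$ becomes the incidence divisor, define the swap via an auxiliary nondegenerate symmetric form, and check that it carries the tautological quotient $p_h^*Q_h|_\cA\to\O_\cA(L)$ to the one produced by your lemma (uniqueness up to scalar follows since $Q_L$ is simple) --- or else prove the involution-free statement directly, as the paper does, by showing that the jumping divisors of $E_0$ computed in the two rulings are both equal to $\cA$.
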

  In particular, whenever $E_0\simeq\iota^*E_0$ (e.g. when $n=q=2$ and 
   $E_0$ comes from a symmetric monad,
see \S \ref{construction} below) ,
  then $\cA$ is symmetric.\par
  See Corollary  14 of \cite{p2p2-II}  for another, more explicit proof of this result 
  in the special case $n=q=2$. The symmetry will play a critical role in our analysis of
  zero sets in that case.
  \begin{proof}
  Note that because $\cA$ restricts to the analogous divisor on $\P^2_L\times\P^2_h$, 
  the essential case is 
  $n=2$.\par
  By the discussion in \S\ref{jumping-sec}, 
  $\cA$ is the union of the $L$- jumping lines. More precisely, for any line $\l$
  contained in some fibre  $\P^n_L\times x$, we have
  \eqspl{lines-cases}{
  E_0(-L)|_\l=\begin{cases}
  (n-1)\O_\l\oplus\O_\l(-1), \l\not\subset \cA\\
  \O_\l(q-1)\oplus(n-2)\O_\l\oplus\O_\l(-q), \l\subset\cA.
  \end{cases}
  }
  Note that $\cA$ fibres over both $\P^n_h$ and $\P^n_L$ with fibres $\P^{n-1}$
  and to prove \eqref{modif-over-L} it will suffice to
  show the analogous result  for each if its $h$-lines, i.e. lines  $f$ contained in
  fibres over $\P^n_L$.
  Fixing $x\in\P^n_L$ and restricting  the modification sequence 
   on $x\times\P^n_h$, and setting
 $A=(x\times \P^n_h)\cap\cA$, we  get exact
  \eqspl{mod-on-p2}{\exseq{\check E_0|_{x\times\P^n_h}}{F^{a*}Q_h}{\O_{A}}.}
  Now for any line $M\subset x\times\P^n_h$, we have
  $Q_h|_M=\O_M(1)
  \oplus(n-2)\O_M$, hence  
  \[F^{a*}Q_h|_M=\O_M(q)\oplus(n-1)\O_M,\] where the $\O(q)$
  summand comes from the 2-dimensional subspace  $W$ of $V=H^0(\O(h))^*$ corresponding to $M$. Now
  if $M\not\subset A$ and $x=M.A$, we get  an exact restriction 
  \[\exseq{\check E_0|_M}{O_M(q)\oplus(n-1)\O_M}{\O_x}.\]
  If $U\subset V$ denotes the 2-dimensional subspace of $V$ corresponding to $f$, the right map above corresponds to
 the quotient map $V\to V/U$. Now  $U$ and $W$ correspond to mutually transverse subspaces of 
 $V=H^0(\O(h))^*$ intersecting in the 1-dimensional subspace corresponding to $x$. 
 Therefore at $p$, $U$ yields a 1-dimensional subspace of the fibre of $Q_h$ at $p$
 that is different from the one corresponding to $W$, viz. the fibre of $\O(2)$.
 Hence the $\O_M(q)$ summand above surjects to $\O_p$, so that
  \[\check E_0|_M=\O_M(q-1)\oplus(n-1)\O_M.\]
  \par
  On the other hand I claim that for $n=2$, $A$ itself is a jumping lines for $G:=\check E_0|_{x\times\P^2_h}$. 
  Indeed the restriction of \eqref{mod-on-p2} on $A$ yields exact
  \[0\to\O_A(-1)\to G|_A\to F^{a*}Q_h|_A\to \O_A\to 0.\]
  In terms of coordinates, we may assume $A$ has equation $x_0$ while the map $\O(-q)\to3\O$
  with cokernel $ F^{a*}Q_h$ has the form $(x_0^q, x_1^q, x_2^q)$ and the map $G\to\O_A$
  is descended from the map $3\O\stackrel{(1,0,0)}{\to} \O_A$. This yields a map $2\O_A\to G|_A$, 
  essertially $(x_1^q, x_2^q)$, whose kernel is $\O_A(-q)$ which ylelds
  an injection $\O_A(q)\to G|_A$ with cokernel $\O_A(-1)$.  The extension
  \[\exseq{\O_A(q)}{G|_A}{\O(A(-1)}\] automatically splits, so $G|_A=\O_A(q)\oplus\O_A(-1)$.
  This $A$ is a jumping line
  and consequently for $n=2$, we have $\cA_L=\cA_h$.
  As noted above, this implies the same conclusion for all $n\geq 2$.\par
%
%
\end{proof}
 		
  \part{The case $p=n=2$}
  Here we work in char. 2 and undertake a detailed study of our bundle $E$ on $\P^2\times\P^2$. Notably, we will develop an equivalent
  construction of $E$ as the cohomology of a very simple monad,
  where we recall again that a monad is a 3-term complex exact off the middle term. The main result is the following.
  \begin{thm}\label{mod-monad thm}
  In case $p=n=2$, the following two sets of
  bundles on $\P^2_L\times\P^2_h$ are equal (up to isomorphism of bundles):\par
  (i) the bundles $E_{\textrm{mod}}(L)$ where $E_{\textrm{mod}}$  fits in an exact sequence 
  as in \eqref{modif-q}
  \[\exseq{E_ {\textrm{mod}}}{(p_h^*F^*Q_h)}{\O_\cA(2L)}\]
  (ii) the bundles $E_{\textrm{monad}}$ that are the cohomology of a monad
  \[\O\to Q_L\otimes Q_h\to\O(L+h).\]
  \end{thm}

\section{Construction via a monad}\label{construction}
$\P^2_L$ denotes  a copy of $\P^2$ with $L$ a line, $V_L=H^0(\O(L))^*$ and
$Q_L=V_L\otimes\O/\O(-L)$ the universal quotient bundle. Ditto to
$\P^2_h, h, V_h, Q_h$. 
\subsection{Construction}
Set 
\[V_{L, h}=V_L\otimes V_h=H^0(\Hom(Q_L\otimes Q_h, \O(L+h))),\]
a 9-dimensional vector space. 
We have a {symmetric} pairing
\[V_{L, h}\times V_{L, h}\to H^0(\O(L+h))\otimes\det(V_{L, h}),\]
\[(v_1\otimes w_1, v_2\otimes w_2)\mapsto \carets{v_1, v_2}\otimes\carets{w_1, w_2},\]
which corresponds up to scalars to
\[(\phi, \psi)\mapsto \psi\circ \phi.\]
This pairing can be viewed as a 9-dimensional system of symmetric bilinear forms 
whose corresponding system of quadratic forms  is just the  9-dimensional system of quadrics on
$\P^8$ cutting out the Segre image $\P^2_L\times\P^2_h$  
(though  this will not be important in the sequel). Note that
\[\carets{v\otimes w, v\otimes w}=0\]
Therefore in char. 2, we have
\[\carets{\phi, \phi^t}=0 \ \  \forall \phi \in V_{L,h}.\]
Moreover for general $\phi$ (specifically of rank 3), $\phi$ is fibrewise injective while
$\phi^t$ is fibrewise surjective. Hence such a $\phi$ defines a monad
\eqspl{monad-e0}{
\O\stackrel{\phi}{\to} Q_L\otimes Q_h\stackrel{\phi^t}{\to}\O(L+h).
}
We call such a monad \emph{symmetric}.
Then set
\eqspl{E-from-monad}{E_0=\ker(\phi^t)/\im(\phi)\\
E=E_0(L+h).} 
These are  rank-2 vector bundles on $\P^2_L\times\P^2_h$.
As we will show in \S \ref{cohomology},  $\ker(H^0(\phi^t))=\im(H^0(\phi))$, and it follows that
\eqspl{e0-vanish}{h^0(E_0)=0.
}
and likewise
\eqspl{h1-for-e0}{
h^1(E_0)=1.
}
A straightforward calculation shows that $E_0$ has Chern class
\eqspl{ce0}{
c(E_0)=1+L+h+2L^2+2h^2+Lh.
}
The display of the monad \eqref{monad-e0} is as follows:
\small
\eqspl{display-2-2}{
\begin{matrix}
&&&0&&0&\\
&&&\downarrow&&\downarrow&\\
0\to&\O&\to& G_0&\to& E_0&\to 0\\
&\parallel &&\downarrow&&\downarrow&\\
0\to&\O&\to&Q_L\otimes Q_h&\to&F_0&\to 0\\
&&&\downarrow&&\downarrow&\\
&&&\O(L+h)&=&\O(L+h)&\\
&&&\downarrow&&\downarrow\\
&&&0&&0&
\end{matrix}
}
\normalsize
{\bf{ Important note:} } until further notice (that is, until \S \ref{image}), 
$E_0$ denotes the bundle constructed from the monad above 
and likewise for $E$, and not those constructed via a modification as in Part 1. Of course,
our purpose is to \emph{ eventually} prove that they are the same but this is not known at this stage.
 \subsection{A vanishing result}
 As a warmup we will show that for $E_0$ given by the monad \eqref{monad-e0}, 
 we have
  \eqspl{h1=0}{H^1(E_0(h))=0.}  
More complete cohomological results are given in the next section.

\begin{proof}
Note that the claimed vanishing is equivalent to $H^0(E_0(h))=3$.
Via the defining monad, the
 claim is equivalent to surjectivity of the map induced by $\psi$
 \[H^0(Q_L\otimes T_h)\to H^0(\O(L+2h)),\]
 We will prove the surjectivity under the assumption that $\psi$ corresponds to a rank-2 tensor
 \[v_{p_1}\otimes v_{q_1}+v_{p_2}\otimes v_{q_2}, p_1\neq p_2\in \P^2_L, q_1\neq q_2\in P^2_h.\]
 This will  imply the result for a rank-3 tensor as well, which is the case we need.

 Now for $q\in P^2_L, q\in P^2_h, g\in H^0(\O(h))$, we have
 \[\psi(v_p\otimes gv_q)=\carets{v_{p_1}, v_p}\otimes \carets{v_{q_1}\otimes gv_q}+\carets{v_{p_2}, v_p}\otimes \carets{v_{q_2}\otimes gv_q} \]
 Taking $p=p_2$ (resp. $p=p_1$)
 the images generate $\carets{p_1, p_2}\otimes H^0(\O(2h-q_1))$ (resp. $\carets{p_1, p_2}\otimes H^0(\O(2h-q_2))$).
 and together these generate $\carets{p_1, p_2}\otimes H^0(\O(2h))$. Similarly, taking $q=q_1, q=q_2$
 the images generate $H^0(\O(L))\otimes H^0(\O(2h-\carets{q_1, q_2}))$.
 \end{proof}

\section{Cohomology via monad}\label{cohomology}

This part begins our analysis of the bundle $E=E_0(L+h)$ where $E_0$ is as in \eqref{E-from-monad}. 
Key results are:
\begin{enumerate}
\item determination of the cohomology groups $h^iE(aL+bh)$ for $a,b\ge -5$ (Theorem \ref{coh_groups})
\item there is a unique jumping line of $E$ in each fibre $x\times \bP^2$, and the union of these
lines forms a divisor $A\in \vert L+h\vert$ (Corollary \ref{cor111})
\end{enumerate}
\begin{rem}\label{rmk5} If the bundle $E$ is defined by $\phi=\sum v_i\otimes w_i$, then $A$ has the equation
$\sum v_iw_i=0$.
\end{rem} 

\bigskip

In this section we set $\O(aL+bh)=\O(a,b)$ and let $E$ denote $E_0(L+h)$ where $E_0$ is given by the monad \eqref{monad-e0}. 
We shall determine the cohomology of $E(a,b)$ in the
range $a\ge -5$, $b\ge -5$. For $-5\le a,b\le 3$ the result can be seen in 
the tables below. There is no table for $h^4E(a,b)$, because these groups
 are $0$ in the whole range, and blanks correspond to zeros.
\begin{thm}\label{coh_groups}
(i) The higher cohomology groups of $E(a,b)$ vanish for $a\ge -1,b\ge 0$
and for $a\ge 0, b\ge -1$, hence
\begin{equation*}
h^0 E(a,b)=\chi E(a,b)= \frac{a'b'(a'b'-1)}2 -a'^2-b'^2+1 \qquad \text{(where $a'=a+3$, $b'=b+3$)}
\end{equation*}
as long as $a,b\ge -1$ and $(a,b)\ne (-1,-1)$.

(ii) The dimensions of other cohomology groups are given in the following tables, in which blanks
correspond to zeros.\par \bigskip
\bigskip 
\bigskip
{\small
$h^0 E(a,b)$  \hskip 8cm $h^1 E(a,b)$
\par\noindent
\begin{tabular}{c r | *{9}r }
&&&&&$b$ \\
&& -5 & -4 & -3 & -2 & -1 & 0 & 1 & 2 & 3 \\
\hline
&-5 &  \\
&-4 &  \\
&-3 &  \\
&-2 & \\
&-1 &&& && & 3& 9 & 17 & 27\\
$a$ &0&&&&&3 &19&42& 72 & 109\\
&1 &&&&&9&42&89&150 & 225\\
&2 &&&& & 17 & 72 & 150 & 251 & 375 \\
&3 &&&&&27&109&225&375 & 559\\
&4 &&&&&39&153&314&522 & 777 \\
\end{tabular}
\quad
%
%
\begin{tabular}{c r | *{9}r }
&&&&&&$b$ \\
&& -5 & -4 & -3 & -2 & -1 & 0 & 1 & 2 & 3 \\
\hline
&-5 &&&&& &  &  &  &  \\
&-4 &&&&&  1 & 3  & 6 & 10 & 15 \\
&-3 &&&&& 3 & 8 & 15 & 24 & 35 \\
&-2 &&& & 1&3&6&10&15 & 21\\
$a$ &-1 && 1 & 3 & 3& 1 &&&&\\
 &0 &&  3 & 8 & 6&&&&  & \\
&1 && 6 & 15 &10&& && & \\
&2 && 10 & 24& 15 && & && \\
&3 && 15 &35&21&&&&\\
&4 && 21 &48&28&&&& &  \\
\end{tabular}

\bigskip

\noindent
$h^2 E(a,b)$ \hskip 8cm $h^3 E(a,b)$\medskip
\par\noindent
\begin{tabular}{c r | *{9}r }
&&&&&&$b$ \\
&& -5 & -4 & -3 & -2 & -1 & 0 & 1 & 2 & 3 \\
\hline
&-5 &&&&&3& 9 & 17 & 27 & 39  \\
&-4 &&&&&  &  & & &  \\
&-3 &  && 1 & &  &  & & &  \\
&-2 &&& & &&&& & \\
$a$ &-1 & 3& & & &  &&&&\\
 &0 & 9&  & & &&&&  & \\
&1 &17&  & &&& && & \\
&2 &27&  & &  && & && \\
&3 &39&  &&&&&&\\
&4 &53&  &&&&&& &  \\
\end{tabular}
\quad
\begin{tabular}{c r | *{8}r }
&&&&&&$b$ \\
&& -5 & -4 & -3 & -2 & -1 & 0 & 1   \\
\hline
&-5 &1&3&3& 1& & &  \\
&-4 &3 &1 &&&  &  &   \\
&-3 &3 &&  & &  &  &   \\
&-2 &1&& & &&&  \\
$a$ &-1 && & & &  &&\\
 &0 &&  & & &&&   \\
&1 &&  & &&& &  \\
&2 &&  & &  &&  & \\
&3 &&  &&&&\\
&4 &&  &&&& &  \\
\end{tabular}
}
\end{thm}
\bigskip
\begin{proof}
Our general strategy is to apply the Leray spectral sequence with respect to the 
projection to one of the factors. It turns out that only one of the direct image
sheaves is nonzero for each twist.
The cohomology of $E(a,b)$ for $-5\le a\le -1$ can be read off
from the results in Proposition \ref{p4} below.

Since $h^1 (E(0,-1))=h^2(E (0,-2))=h^3 (E(0,-3))=h^4 (E(0,-4))=0$, we conclude
that $h^1 (E(0,l))=0$ for any $l\ge 0$, and inductively that $h^i(E(a,b))= 0$
for any $i>0$, $a,b\ge 0$.
\end{proof}

We now proceed to outline the details of the calculation. First we recall the 
following well-known result.

\begin{lem}\label{l1}
Let $E$ be a vector bundle on $\P^2$, and assume that
$h^1(E(-1))=1$ and $h^2(E(-2))=0$. Then $h^1(E)=0$.
\end{lem}
\begin{proof}
Let $M$ be an arbitrary line. $E$ splits on $M$ as $\oplus\cO_M(a_i)$,
and the assumptions imply that $h^1 ((E(-1)\otimes\cO_M))\le 1$.
Thus $a_i\ge -1$ for all $i$ and $h^1( (E\otimes\cO_M))= 0$.

Now assume that $h^1(E)>0$. Since $h^1(E\otimes\cO_M)=0$, the multiplication by the linear form
corresponding to $M$ in $H^1(E(-1))\to H^1(E)$ must be surjective, hence $h^1(E)=1$,
and the multiplication map is even bijective. On the other hand the multiplication map
\begin{equation*}
H^1 ((-1)) \times H^0(\cO_{\bP^2}(1)) \to H^1(E)
\end{equation*}
cannot be injective, as the source has dimension $3$. Therefore there must be a linear form
corresponding to a line $M$ so that the induced multiplication is zero.

We have arrived at a contradiction, and therefore we must have $h^1(E)=0$.
\end{proof}
\begin{prop}\label{p2}
Let $E_x$ be the restriction of $E$ to a fiber $\bP^2_L\times\{x\}$.
\begin{enumerate}
\item[\textup{1.}]
The cohomology $h^i (E_x(l))$ in the range $-5\le l\le -1$ is as follows:
\bigskip

\emph{
\begin{tabular}{c c | *{6}c }
& 2 & 2 &&&&  \\
$i$ &1 & & 1 & 2 & 1 &  \\
& 0 &&&&&2 \\
\hline
&& -5 & -4 & -3 & -2 & -1 \\
&&&& $l$
\end{tabular}
}
\item[\textup{2.}]
The cokernel of the map $H^0 (E_x(-1)) \otimes\cO_{\bP^2} \to E_x(-1)$ is
isomorphic to $\cO_M(-1)$ for a line $M$.
\item[\textup{3.}] $E_x(-1)$ splits generically as $\cO\oplus\cO(1)$. $M$ is its only
jumping line, and $E_x(-1)\otimes\cO_M\cong\cO_M(2)\oplus\cO_M(-1)$.
\end{enumerate}
\end{prop}
\begin{proof}
1. Recall that the bundle $E$ is given by the monad
\begin{equation*}
0 \to \cO(1,1) \to Q_L(L)\otimes Q_h(h) \to \cO(2,2)\to 0
\end{equation*}
Hence the restriction of $E$ to a fiber $\bP^2\times\{x\}$ is given by
the restricted monad
\begin{equation*}
0\to \cO(1) \to 2Q(1) \to \cO(2)\to 0.
\end{equation*}
We note that $c_1(E_x)=3$, $c_2(E_x)=4$ and $\chi\big(E_x(l)\big)= (l+4)(l+2)-1$.

The cohomology for $E_x(-3)$ and $E_x(-2)$ can be read off from
the display of the monad via a diagram chase.

Lemma \ref{l1} implies that $h^1(E(-1))=0$, hence $h^0 (E_x(-1))=\chi (E_x(-1))=2$.

The cohomology for $E_x(-4)$ and $E_x(-5)$ follows by Serre duality.

2. The cokernel of the map $H^0 (E_x(-1)) \otimes\cO_{\bP^2} \to E_x(-1)$ is a
sheaf of projective dimension $1$ supported on a divisor of degree
$c_1 E_x(-1)=1$, hence it is locally free on a line $M$. Its Hilbert polynomial
is $\chi E_x(l-1) -\chi \big(2\cO(l)\big) = l$, hence it has rank $1$ and
is isomorphic to $\cO_M(-1)$.

3. Restricting the sequence $0\to 2\cO\to E_x(-1)\to \cO_M(-1)$ to $M$,
one finds an exact sequence $0\to \cO_M(2) \to E_x(-1)\otimes \cO_M\to \cO_M(-1)$
which necessarily splits.

On any other line the restriction of $E_x(-1)$ is globally generated, hence the splitting
type must be $\cO\oplus \cO(1)$.
\end{proof}
\begin{rem}
Hulek \cite{hulek-bundles-1} investigated stable vector bundles $\cF$ of rank $2$ 
on $\P^2$ with odd first Chern class
in characteristic $0$.
He normalizes them so that $c_1=-1$ and studies ``jumping lines of the second kind'',
i.e., lines $M$ with the property that $H^0(M^2,\cF\otimes \cO_{M^2})\ne 0$, where
$M^2$ is the double structure on $M$ induced by its embedding in $\bP^2$.

Hulek defines a sheaf supported on a divisor $C(\cF)$ of degree $2c_2(\cF)-2$ in $(\bP^2)^\vee$
and proves that this sheaf characterizes $\cF$.
He further constructs moduli spaces $M(-1,c_2)$.

Regarding our case $c_2=2$ (in char. 0), he shows that:
\begin{enumerate}
\item $C(\cF)$ consists of two distinct lines. Their point of intersection corresponds to the unique
jumping line of $\cF$.
\item All bundles with $c_2=2$ are projectively equivalent under automorphisms of $\bP^2$.
\item
The moduli space $M(-1,2)$ is isomorphic to the quotient of $(\bP^2)^\vee\times(\bP^2)^\vee-\Delta$
by the natural symmetry action that exchanges the two factors.
\end{enumerate}
These results do not extend to our example in characteristic $2$: It is not hard to see (using the
alternative construction of $E$ from the following section) that all lines are jumping lines of the second kind for $E_x$, hence the
support of $C(E_x)$ is all of $(\bP^2)^\vee$.

\end{rem}
\begin{prop}\label{p4}
The direct image sheaves $R^i pr_{2,*}E(l,0)$ in the range $-5\le l\le -1$ are as follows:
\bigskip

\emph{
\begin{tabular}{c c | *{6}c }
& 2 & $R(2)$ &&&&  \\
$i$ & 1 &&$\cO_{\bP^2}(1)$ & $Q(1)$ & $\cO_{\bP^2}(2)$ &  \\
& 0 &&&&& $R(1)$ \\
\hline
&& -5 & -4 & -3 & -2 & -1 \\
&&&& $l$
\end{tabular}
}

\noindent where $R=pr_{2,*}E(-1,-1)$ is a rank $2$ vector bundle with Chern class
$c(R)=1+3h^2$, Euler characteristic $\chi R(l)=l^2+3l-1$
and the following cohomology $h^i R(l)$ in the range $-4\le l\le 1$
\bigskip

\emph{
\begin{tabular}{c c | *{6}c }
& 2 & 3 & & & & &  \\
$i$ & 1 && 1 & 3 & 3 & 1 &  \\
& 0 &&& & & & 3 \\
\hline
&&-4 &  -3 & -2 & -1 & 0 & 1 \\
&&&&& $l$
\end{tabular}
}
\end{prop}
\begin{proof}
The calculation of the direct images follows the same approach as for the fibers in the
previous Proposition: The direct images for $l=-3$ and $l=-2$ can be derived from the display
of the monad via a diagram chase. For $l=-1$ we use from Proposition \ref{p2} that the higher direct
images vanish. The columns for $l=-5$ and $l=-4$ follow by relative duality for the projection.

The direct image of the monad for $E(-1,-1)$ produces the monad
\begin{equation*}
0\to \cO \to 3Q \to 3\cO(1)\to 0
\end{equation*}
for $R$, from which we can compute its Chern class. The cohomology for $l=-1$ and $l=0$
can be determined most easily from the display of the dual monad.

Lemma \ref{l1} implies that $h^1(R(1))=0$, hence $h^0(R(1))=\chi R(1)=3$, and the other columns
follow by Serre duality.
\end{proof}

\begin{cor}\label{cor111}
There is a short exact sequence
\begin{equation*}
0\to pr_2^*R\to E(-L-h)\to \cO_\cA(-L+2h) \to 0.
\end{equation*}
where $\cA$ is a divisor in $\vert L+h\vert$.
\end{cor}

\begin{proof}
The cokernel of the map 
\begin{equation*}
pr_2^*pr_{2,*}\big( E(-1,-1)\big) \otimes\cO_{\bP^2} \to E(-1,-1)
\end{equation*}
is a sheaf of projective dimension $1$ supported on a divisor in the class
\begin{equation*}
c_1 E(-1,-1)-c_1 \big(pr_2^*pr_{2,*} E(-1,-1)\big)=L+h,
\end{equation*}
hence it is locally free on a divisor $\cA$ in $\vert L+h\vert$.

Our analysis of the restriction to the fibres implies that the cokernel
has rank $1$ and is isomorphic to $\cO_\cA(-L+ah)$ for some integer $a$.

Finally, a short calculation yields $a=2$.
\end{proof}

\bigskip

 \section{From monad to elementary modification}
\bigskip
In this section, we consider a bundle $E$ defined via a self-dual monad 
\begin{equation*}
\begin{tikzcd}[column sep=small]
0\ar[r] & \cO(L+h) \ar[r,"\phi"] & Q_L\otimes Q_h\ar[r,"{\phi^\vee}"] & \cO(2L+2h) \ar[r] & 0.
\end{tikzcd}
\end{equation*}
We show (Theorem \ref{thm22}) that $E(-2L-h)$ can be represented as an elementary modification,
namely as the kernel of the composition
\begin{equation*}
\begin{tikzcd}[column sep=small]
pr_2^*F^*Q\ar[r] & \pi_2^*F^*Q\ar[r,"F^*\psi"] & \cO_\cA(2L)
\end{tikzcd}
\end{equation*}
where $\cA\cong \P(Q)$ is a smooth divisor in $\vert L+h\vert$, $\pi_2\colon \cA\to\P^2$ the restriction
of the canonical projection $pr_2$, and
$\psi$ is the canonical map $\pi_2^*Q\to\cO_\cA(L)$ of sheaves on $\cA$. This will, in particular, 
prove half of Theorem \ref{mod-monad thm}.
\medskip

This representation has several  other important applications:
\begin{enumerate}
\item
a second monad (Corollary \ref{cor23}),
\item
 the symmetry of $\cA$ (Corollary \ref{symmetry-cor}),
\item
and the splitting of $E$ on $\cA$ (Corollary \ref{cor25}).
\end{enumerate}
These are all specializations
of properties of the general construction in \S 2.

\medskip

Later in \S \ref{image} we will show show that the representation via the elementary modification
induces the monad that we started with.

We assume $E_0$ is the cohomology of a  monad \eqref{monad-e0}.

Recall the exact sequence
 \eqspl{e-quot0}{
 \exseq{p_2^*(R)}{E_0}{\O_{\mathcal A}(-L+2h)}
 }
 from corollary \ref{cor111}.
 
 \begin{prop}
 $\cA$ is isomorphic to the $\bP^1$-bundle $\bP(Q)$
 in its standard embedding given by the global sections
 $3\cO\to Q$.
 \end{prop}
 \begin{proof}
 We know from our investigation of the cohomology of $E$
 that $\cA$ is a divisor in $\vert L+h\vert$, fibered in lines over the second
 factor of $\bP^2$. These lines represent the jumping lines of the
 vector bundle $E$ in the fibers $\bP^2\times\{x\}$.
 
 Applying $pr_{2,*}$ to the sequence $0\to\cO(-h)\to\cO(L)\to\cO_\cA(L)\to 0$
 we find a sequence $0\to \cO(-1)\to 3\cO \to pr_{2,*}\cO_\cA(L)\to 0$ on $\bP^2$
 where the last term is a rank $2$ locally free sheaf on $\bP^2$.
 However, there is only one sheaf with such a resolution, the twisted tangent bundle
 $Q$, hence $\cA\cong \bP(Q)$,
 and $\cO_\cA(L)$ is isomorphic to the twisting sheaf $\cO_{\bP (Q)}(1)$. 
 \end{proof}
 
  \begin{thm}\label{thm22}
  $E(-2L-h)$ is isomorphic to the kernel of the composition
  \begin{equation*}
  \begin{tikzcd}[column sep=small]
  pr_2^*F^*Q\ar[r] & \pi_2^*F^*Q\ar[r,"F^*\psi"] & \cO_\cA(2L)
  \end{tikzcd}
  \end{equation*}
  where $\psi$ is the canonical map $\pi_2^*Q\to\cO_\cA(L)$ of sheaves on $\cA$.
  \end{thm}
  \begin{proof}
  Recall from §\ref{cohomology} that we have a short exact sequence
  \begin{equation*}
  0\to pr_2^*R\to E(-L-h)\to \cO_\cA(-L+2h) \to 0
  \end{equation*}
  where $R=pr_{2,*}E(-L-h)$.
  
  Its twisted dual (by $\cO(h)$) is a sequence
  \begin{equation}\label{eq1}
  0\to E(-2L-h) \to (pr_2^*R)(h)\to\cO_\cA(2L)\to 0
  \end{equation}
  
  Our plan is to identify the two sheaves on the right and the map between them.
  
  Restricting \eqref{eq1} to $\cA$, we obtain $0\to\cO_\cA(-2L+2h)\to \pi_2^*R(1)\to\cO_\cA(2L)\to 0$,
  hence applying $\pi_{2,*}$ yields an exact sequence
  \begin{equation}\label{eq2}
  0 \to R(1) \to \sym^2 Q \to \cO_{\bP^2}(1)\to 0.
  \end{equation}
  
  The morphism $\sym^2Q \to \cO(1)$ corresponds to a section of the twisted divided power
  $(D^2 Q)(-1)$. By Example 1 above, this section is essentially unique, and given by
  the inclusion of the (twisted) exterior power.
  
  As a consequence, the sequence \eqref{eq2} agrees with the sequence in 
  Lemma 1 above,
  and $R(1)\cong F^*Q$, hence $pr_2^*(R(1))\cong pr_2^*F^*Q$.
  
  Finally, consider the diagram
  \begin{equation*}
  \begin{tikzcd}[column sep= small]
  0\ar[r] & \cO_\cA(-2L+2h) \ar[r] \ar[dr, dashrightarrow] & F^* \pi_2^*Q\ar[r] \ar[d,"F^*\psi"] & \cO_\cA(2L) \ar[r] & 0 \\
  && F^*\cO_\cA(L)
  \end{tikzcd}
  \end{equation*}
  where $\psi$ is the Frobenius pullback of the canonical map $\pi_2^*Q_h\to \cO_\cA(L)$.
  
  Since the dashed diagonal map corresponds to a section of $\cO_\cA(4L-2h)$, 
  hence must be $0$, $F^*\psi$ factors through
  the map $F^*\pi_2^*Q \to\cO_\cA(2L)$, and for degree reasons the resulting
  map $\cO_\cA(2L)\to F^*\cO_\cA(L)$ is an isomorphism.
  
  We conclude that the map $\pi_2^*R(1)\to\cO_\cA(2L)$ in the restriction of \eqref{eq1}
  to $\cA$ is isomorphic to the Frobenius pullback $\pi_2^*F^*Q \to F^*\cO_\cA(L)$,
  as required to finish the proof.
  \end{proof}
  
  For the next result, we choose coordinates so that
  $\bP^2\times\bP^2=\Proj[a,b,c,x,y,z]$, and the equation of $\cA$ is
  given by $f=ax+by+cz$.
  
  \begin{cor}\label{cor23}
  $E(-2L-h)$ is isomorphic to the homology of the monad
  \begin{equation*}
  \begin{tikzcd}[column sep=small]
  0\ar[r] & \cO(-2h)\ar[r,"\phi_2"] & \cO^{\oplus 3}\oplus\cO(L-h)\ar[r,"\phi_1"] & \cO(2L)\ar[r] & 0
  \end{tikzcd}
  \end{equation*}
  where the maps are given by $\phi_2=\begin{pmatrix} x^2 & y^2 & z^2 & f \end{pmatrix}^t$
  and $\phi_1=\begin{pmatrix} a^2& b^2 &c^2& f  \end{pmatrix}$
  \end{cor}
  \begin{proof}
  By Theorem \ref{thm22}, $E(-2L-h)$ 
  can be represented as the kernel of the right column in the following diagram:
  \begin{equation*}
  \begin{tikzcd}
  0 \ar[r] & \cO(-2h) \ar[r,"\begin{pmatrix} x^2 \\ y^2 \\ z^2 \end{pmatrix}"] \ar[d,"f"] & 
  \cO^{\oplus 3} \ar[r] \ar[d,"( a^2\enskip b^2\enskip c^2 )"] & pr_2^*F^*Q \ar[r] \ar[d] & 0 \\
  0 \ar[r] & \cO(L-h) \ar[r,"f"] & \cO(2L) \ar[r] & \cO_\cA(2L) \ar[r] & 0
  \end{tikzcd}
  \end{equation*}
  The mapping cone of the two rows, with the right column excluded, provides the required complex.
  \end{proof}
  As a consequence of the above monad, we get another proof of the symmetry result, 
Proposition \ref{A-symmetry} above:  
  \begin{cor}\label{cor24}\label{symmetry-cor}
  Assume $E$ is given by a symmetric monad \eqref{monad-e0} .
  $E(-L-2h)$ is isomorphic to the kernel of the composition
  \begin{equation*}
  \begin{tikzcd}[column sep=small]
  pr_1^*F^*Q\ar[r] & \pi_1^*F^*Q\ar[r,"F^*\psi'"] & \cO_\cA(2h)
  \end{tikzcd}
  \end{equation*}
  where $\psi'$ is the 
  canonical map $\pi_1^*Q\to\cO_\cA(h)$.
  
  In particular, $\cA$ is also the locus of the jumping lines 
  of the restriction of $E$ to the fibers $\{x\}\times\bP^2$.
  \end{cor}
  \begin{proof}
  The dual of the monad in Corollary \ref{cor23} is a monad for
  $E(-L-2h)$. Unwinding it we obtain the diagram
  \begin{equation*}
  \begin{tikzcd}
  0 \ar[r] & \cO(-2L) \ar[r,"\begin{pmatrix} a^2 \\ b^2 \\ c^2 \end{pmatrix}"] \ar[d,"f"] & 
  \cO^{\oplus 3} \ar[r] \ar[d,"( x^2\enskip y^2\enskip z^2 )"] & pr_1^*F^*Q \ar[r] \ar[d] & 0 \\
  0 \ar[r] & \cO(h-L) \ar[r,"f"] & \cO(2h) \ar[r] & \cO_\cA(2h) \ar[r] & 0
  \end{tikzcd}
  \end{equation*}
  where $E(-L-2h)$ is the kernel of the right column. The left column establishes the first statement
  of the corollary.
  
  The second statement follows by restricting the 
  right column to the fibers of $pr_2$.
  \end{proof}
  
  \begin{cor}\label{cor25}
  The restriction of $E$ to $\cA$ splits as $\cO_\cA(3L)\oplus\cO_\cA(3h)$, and the subbundle
  $\cO_\cA(3L)$ $($resp. $\cO_\cA(3h))$ is uniquely determined as the image of the map
  $(pr_1^*F^*Q)(L)\to E$ $($resp. $ (pr_2^*F^*Q)(h)\to E)$.
  In particular, the canonical map 
  \begin{equation*}
  (pr_1^*F^*Q)(h)\oplus (pr_2^*F^*Q)(L)\to E
  \end{equation*}
  is surjective.
  \end{cor}
  \begin{proof}
  We restrict the exact sequence from Corollary \ref{cor111}
  \begin{equation*}
  0\to (pr_2^*F^*Q)(-h)\to E(-L-h)\to \cO_\cA(-L+2h) \to 0.
  \end{equation*}
  to $\cA$ and obtain
  \begin{equation*}
  0\to \cO_\cA(-2L+h) \to
  (\pi_2^*F^*Q)(-h)\to E\otimes\cO_\cA(-L-h)\to \cO_\cA(-L+2h) \to 0.
  \end{equation*}
  Twisting by $\cO(L+h)$ we find an epimorphism $E\otimes\cO_\cA\to\cO_\cA(3h)$
  with kernel $\cO_\cA(3L)$ which is the image of $(\pi_2^*F^*Q)(L)$.
  
  By symmetry (Corollary \ref{cor24}) we obtain another epimorphism $E\otimes\cO_\cA\to\cO_\cA(3L)$
  with kernel $\cO_\cA(3h)$ which is the image of $(\pi_1^*F^*Q)(h)$.
  
  The injection $\cO_\cA(3h)\to E\otimes\cO_\cA$ now provides the required splitting of the
  surjection $E\otimes\cO_\cA\to\cO_\cA(3h)$.
  
  The remaining statements of the corollary follow immediately.
  \end{proof}
  
  \section{From elementary modification to monad}\label{image}
  \subsection{Cohomology} Here we assume $E_0$ is given by a (dual) elementary modification
  \eqspl{e-quot}{
  \exseq{(p_2^*(F^*Q_h))(-h)}{E_0}{\O_{\mathcal A}(-L+2h)}
  }
  where  $\mathcal A$ is a divisor  of class $L+h$
  which is a $\P^1$-bundle over $\P^2_h$ of the form $\P(Q_h)$.\par
  \par
  
   Twisting \eqref{e-quot} by $L+h$ and using $H^1(R(h))=0$, we recover the fact that $E$
  is globally generated, though $E(-L)$  and $E(-h)$ are not. 
  Also this vanishing plus the vanishing of $H^1(\O_{\mathcal A}(3h))$ (obvious)
  imply again that $H^1(E)=0$, hence $h^0(E)=19$. The same sequence also implies easily that
  \eqspl{e-vanishing-final}{H^i(E(aL+bh))=0, \forall i\geq 1, a,b\geq 0.
  }
  Note that it follows from the results of the previous section that the fibres of $\mathcal A$ 
  over the two factors are precisely the
  'jumping rulings' of $E$: i.e. that $E_{\\l\times x}\simeq\O(2)\oplus\O(1)$ when
  $\l\times x$ is not  a ruling of $\mathcal A$ over $\P^2_h$ and $\O(3)\oplus\O$ otherwise; and ditto with fibres
  interchanged.
  \par
  \subsection{The monad}
  Here the characteristic is 2 and $R=(F^*Q_h)(-h)$.
  Finally we claim that the exact sequence \eqref{e-quot} implies that $E$ is the cohomology of a 
  monad \eqref{monad-e0},
  extending the fibrewise result in the previous section. To see this we use the projection $pr_2$ 
  and the corresponding fibrewise Beilinson monad, which shows that $E$ is the abutment of a
  spectral sequence with $E_1$ term
  \[E_1^{p,q}=R^qp_{2*}(E((-3+p)L))\otimes \wedge^{2-p}(Q_L(-L)).\]
  Now  the vanishing of the higher direct images for $q\neq 1$ follows from the corresponding
  fibrewise result, and the evaluation of the images for $q=1$ is done similarly, using that
  $\mathcal A=\P(Q_h)$.

  \section{Appendix 1: The graded module of sections of $E$}
    
   Here we present a resolution of the
    module of sections of $E$ (Theorem \ref{thm32}) and generators for the ideal
    sheaves of the zero schemes (Theorem \ref{thm33}). Some results were obtained
    with the aid of the computer algebra package \emph{Macaulay2} \cite{M2}.
    The supporting code can be found in the next appendix.

    \begin{prop}\label{gens}
    The bigraded module $\oplus_{m,n}H^0(E(m,n))$ has $7$ minimal 
    generators: three each in bidegrees $(-1,0)$, $(0,-1)$ 
    and one in bidegree $(0,0)$.
    \end{prop}
    \begin{proof}
    The computation of the cohomology in section 5 exhibits three
    generators each in bidegrees $(-1,0)$ and $(0,-1)$. 
    Furthermore, the homomorphisms $H^0(E(-L))\otimes H^0(\cO(L))\to H^0(E)$
    (resp. $H^0(E(-h))\otimes H^0(\cO(h))\to H^0(E)$) are both injective, and
    the restrictions of these sections to $\cA$ lie by Corollary \ref{cor25}
    in different subbundles of $E\otimes\cO_\cA$, hence the images
    intersect only in $0$. Therefore their sum generates an $18$-dimensional 
    subspace of $H^0(E)$.
    Since $h^0(E)=19$, one additional generator is needed in bidegree $(0,0)$. 
    These together generate the sections of $E(m,n)$ in all twists $m,n\ge 0$.
    \end{proof}
    \subsection{The minimal bigraded resolution for $E$}
    
    \begin{thm}\label{thm32}
    The vector bundle $E$ has a minimal bigraded resolution
    with the following terms:
    
    {\small
    \begin{tikzpicture}[commutative diagrams/every diagram]
    \node (P) at (-1,0) {$0$};
    \node (P0) at (0.7,0) {$\cO(-2,-2)$};
    \node (P1) at (3,.5) {$\cO(-2,-1)^{\oplus 3}$};
    \node (P2) at (3,-.5) {$\cO(-1,-2)^{\oplus 3}$};
    \node (P3) at (5.2,1) {$\cO(-2,-0)^{\oplus 3}$};
    \node (P4) at (5.2,0) {$\cO(-1,-1)^{\oplus 8}$};
    \node (P5) at (5.2,-1) {$\cO(0,-2)^{\oplus 3}$};
    \node (P6) at (7.5,1.5) {$\cO(-2,1)$};
    \node (P7) at (7.5,.5) {$\cO(-1,0)^{\oplus 6}$};
    \node (P8) at (7.5,-.5) {$\cO(0,-1)^{\oplus 6}$};
    \node (P9) at (7.5,-1.5) {$\cO(1,-2)$};
    \node (P10) at (9.5,1) {$\cO(0,1)^{\oplus 3}$};
    \node (P11) at (9.5,0) {$\quad\cO\quad$};
    \node (P12) at (9.5,-1) {$\cO(1,0)^{\oplus 3}$};
    \node (P13) at (11,0) {$E$};
    \node (P14) at (12,0) {$0.$};
    \node (Q1) at (3,0) {$\quad\oplus\quad$};
    \node (Q2) at (5.2,.5) {$\oplus$};
    \node (Q3) at (5.2,-.5) {$\oplus$};
    \node (Q4) at (7.5,-1) {$\oplus$};
    \node (Q5) at (7.5,0) {$\quad\oplus\quad$};
    \node (Q6) at (7.5,1) {$\oplus$};
    \node (Q7) at (9.5,-.5) {$\oplus$};
    \node (Q8) at (9.5,.5) {$\oplus$};
    \path[commutative diagrams/.cd, every arrow, every label]
    (P) edge node {} (P0)
    (P0) edge node {} (Q1)
    (Q1) edge node {} (P4)
    (P4) edge node {} (Q5)
    (Q5) edge node {} (P11)
    (P11) edge node {} (P13)
    (P13) edge node {} (P14);
    \end{tikzpicture}
    }
    \end{thm}
    \begin{proof}
    Proposition \ref{gens} showed that $\oplus_{m,n} H^0 (E(m,n))$ is
    generated by $H^0 (E(0,-1))$, $H^0 (E(-1,0))$ and one additional section of $H^0 (E)$.
    
    The computer algebra package \emph{Macaulay2} \cite{M2} can create a bigraded module
    and compute its minimal bigraded resolution.
    A priori we only know that the sheafification of the bigraded module will agree with $E$. However,
    inspection of the generators of this module show that their numbers and degrees agree
    with those of $\oplus_{m,n} H^0 (E(m,n))$, hence the resolution captures all sections
    of all twists of $E$.
    \end{proof}
    \subsection{generators of the ideal sheaf of a section of $E$}
  
  In the following, we let $k[a,b,c,x,y,z]$ be the bihomogeneous coordinate ring of $\bP^2\times\bP^2$.
    
    \begin{thm}\label{thm33}
    Let $s\in H^0 (E(m,n))$ be a non-zero section. Then there exist functions
    $f_1,f_2,f_3\in H^0(\cO(m+1,n))$, $g_1,g_2,g_3\in H^0(\cO(m,n+1))$ and $h\in H^0(\cO(m,n))$
    such that the homogeneous ideal of the zero scheme $Z$ of $s$ is generated by the $7$ entries of
    the row vector
    \begin{equation*}
    w=\begin{pmatrix}
    f_1 & f_2 & g_1 & f_3 & g_2 & g_3 & h
    \end{pmatrix}
    \cdot B \end{equation*}
    where $B$ is the following $7\times 7$-matrix:
    
    {\small
    \begin{equation*}
    {
    \left[\begin{matrix}
    
    b^2y^2+c^2z^2 &   axy^2+by^3+cy^2z &a^2y^2 & 0 \\
    b^2x^2  & ax^3+bx^2y+cx^2z &a^2x^2+c^2z^2  &   axz^2+byz^2+cz^3 \\
    ab^2x+b^3y+b^2cz &a^2x^2+b^2y^2  &   a^3x+a^2by+a^2cz& a^2z^2 \\
    c^2x^2 & 0 & c^2y^2  & axy^2+by^3+cy^2z\\
    0  &c^2x^2 & ac^2x+bc^2y+c^3z& b^2y^2+c^2z^2 \\ 
    ac^2x+bc^2y+c^3z& c^2y^2 & 0  &   a^2y^2\\
    b^2cxy+bc^2xz &  acx^2y+bcxy^2 &  a^2cxy+ac^2yz &  aby^2z+acyz^2  
    \end{matrix}\right.
    }
    \end{equation*}
    \begin{equation*}
    \left.\begin{matrix}
    & axz^2+byz^2+cz^3 &a^2z^2 & aby^2z+acyz^2 \\
    &0 & b^2z^2 & abx^2z+bcxz^2 \\
    &b^2z^2 & 0  &  a^2bxz+ab^2yz \\
    & ax^3+bx^2y+cx^2z& a^2x^2+b^2y^2 &  acx^2y+bcxy^2 \\
    &b^2x^2    &      ab^2x+b^3y+b^2cz& b^2cxy+bc^2xz \\
    & a^2x^2+c^2z^2   &  a^3x+a^2by+a^2cz &a^2cxy+ac^2yz \\
    & abx^2z+bcxz^2  & a^2bxz+ab^2yz &  0 
    \end{matrix}\right]
    \end{equation*}
    }
    
    Conversely, the entries of $\begin{pmatrix} f_1 & f_2 & g_1 & f_3 & g_2 & g_3 & h
    \end{pmatrix} \cdot B$
    generate the zero scheme of a section of $E(m,n)$ for any functions $f_1,\dotsc,g_3, h$
    as specified above.
    \end{thm}
    
    The matrix $B$ is presented in the form provided by \emph{Macaulay2}.
    The entries of $w$ are not ordered by bidegree.
    
    \begin{proof}
    (compare \cite[5.1.11]{CLO})
    Let $s$ be a section of $E(a,b)$, and consider the following diagram
    \begin{equation*}
    \begin{tikzcd}[column sep=small]
    && E_1(a,b) \ar[d] \\
    && E_0(a,b) \ar[d] \ar[dr,dashed] \\
    0 \ar[r] & \cO \ar[r,"s"] & E(a,b) \ar[r] & \cO(2a+3,2b+3)\ar[r] & \cO_Z \ar[r] & 0
    \end{tikzcd}
    \end{equation*}
    where $E_1 \to E_0 \to E\to 0$ is the presentation of $E$ from Theorem \ref{thm32} above.
    
    The dashed
    diagonal arrow determines bihomogeneous polynomials $P_1,P_2,P_3\in H^0(\cO(m+2,n+3))$,
    $Q_1,Q_2,Q_3\in H^0(\cO(m+3,n+2))$ and $R\in H^0(\cO(m+3,n+3))$.
    
    Now write $v^t=\begin{pmatrix} P_1 & P_2 & P_3 & Q_1 & Q_2 & Q_3 & R \end{pmatrix}$ 
    and let $A$ be the $7\times 14$-matrix specifying the
    map $E_1\to E_0$. Since $v^t$ corresponds to a section of $E(m,n)$,
    we conclude that $v^t\cdot A=0$.
    
    The latter is equivalent to $A^t\cdot v=0$, $v\in\ker A^t$. Since all rings and modules are
    noetherian, $\ker A^t$ is finitely generated, and a set of generators can be calculated by
    \emph{Macaulay2}, forming the columns of a matrix $B^t$.
    
    Hence we can write $v=B^t\cdot w^t$, where
    $w=\begin{pmatrix} f_1 & f_2 & g_1 & f_3 & g_2 & g_3 & h \end{pmatrix}$.
    Inspection of $B$ shows that the entries of $w$ are homogeneous polynomials
    with the stated bidegrees.
    
    Conversely, given any vector $w$ with entries as above, we have
    $(w\cdot B) \cdot A= w\cdot (B\cdot A)=w\cdot 0=0$, hence $w\cdot B$
    determines a section of $E(m,n)$.
    \end{proof}
    
    \begin{rem}
    We know that the sheaf map
    \begin{equation*}
    F^*Q_L(h)\oplus F^*Q_h(L)\to E
    \end{equation*}
    is surjective. Therefore the ideal sheaf of the zero scheme of any section can be
    generated by the first $6$ entries of $w$ alone. In Theorem \ref{thm33} we showed
    that sometimes even the homogeneous ideal is generated by only these $6$ polynomials.
    \end{rem}
    
    \begin{cor}\label{cor35}
    Let $s$ be a section of $E(m,n)$ corresponding to the row vector
    \[w=\begin{pmatrix}
    f_1 & f_2 & g_1 & f_3 & g_2 & g_3 & h
    \end{pmatrix}\]
    where
    $f_1,f_2,f_3\in H^0(\cO(m+1,n))$, $g_1,g_2,g_3\in H^0(\cO(m,n+1))$ and $h\in H^0(\cO(m,n))$.
    
    The restriction of $s$ to $\cA$ agrees with the section $(p,q)$ of $\cO_\cA(m+3,n)\oplus\cO_\cA(m,n+3)$
    where
    \begin{align*}
    p & = f_1\cdot a^2 + f_2\cdot b^2 + f_3\cdot c^2 + h\cdot abc\\
    q & = g_1\cdot z^2 + g_2\cdot x^2 + g_3\cdot y^2 + h\cdot xyz
    \end{align*}
    \end{cor}
    \begin{proof}
    $p$, $q$ depend $k[a,b,c,x,y,z]$-linearly on the $f_i$, $g_i$, $h$ so that it suffices to check
    the equations on the basis vectors, e.g., by \emph{Macaulay2}.
    \end{proof}

    \section*{Appendix 2: Macaulay2 code}
    
    This is the Macaulay2 code used for the calculations in Theorems \ref{thm32}, \ref{thm33} and 
    Corollary \ref{cor35}:
    \bigskip
    
    \begin{verbatim}
    R = ZZ/2[a,b,c,x,y,z, Degrees=>{{1,0},{1,0},{1,0},{0,1},{0,1},{0,1}}]
    m1 = R^{{0,3},{1,2},{1,2},{1,2}}
    phi1 = map( m1, R^{{-1,2}}, matrix{{a*x+b*y+c*z},{a^2},{b^2},{c^2}})
    phi2 = map( R^{{1,4}}, m1, matrix{{a*x+b*y+c*z,x^2,y^2,z^2}})
    E = trim( (ker phi2) / (image phi1) )
    E1 = resolution E
    B = transpose generators ker transpose presentation E
    A = ideal(a*x+b*y+c*z)
    
    w = matrix{{b+c,b+c,x+z,a+c,z,y,1}}
    S = ideal(w * B)
    
    sat = i -> saturate(saturate(i, ideal(a,b,c)), ideal(x,y,z))
    sing = i -> sat( minors( codim i, jacobian(i)) + i)
    
    codim S                                             -- should be 2
    SingS = sing S                                    -- should be ideal 1
    
    AS = sat(A + S)
    for i from 0 to (numgens AS)-1 do if degree AS_i=={3,0} then p1 = ideal(AS_i)
    for i from 0 to (numgens AS)-1 do if degree AS_i=={0,3} then p2 = ideal(AS_i)
    C1 = sat(p1 + S) : AS
    C2 = sat(p2 + S) : AS
    minimalPrimes C1
    minimalPrimes C2
    \end{verbatim}

\bibliographystyle{amsplain}
\bibliography{mybib}
	\end{document}